\newtheorem{theorem}{Theorem}
\newtheorem{prop}[theorem]{Proposition}
\newtheorem{lem}[theorem]{Lemma}
\newtheorem{cor}[theorem]{Corollary}
\theoremstyle{definition}
\newtheorem{defi}[theorem]{Definition}
\newtheorem{bem}[theorem]{Remark}
\newtheorem{examplex}[theorem]{Example}
\newtheorem{nota}[theorem]{Notation}
\newtheorem{assum}[theorem]{Assumption}
\newcommand{\para}{\theta}
\newcommand{\Para}{\Theta}
\DeclareMathOperator{\X}{\mathcal{X}}
\DeclareMathOperator{\BigO}{\mathcal{O}}
\newcommand{\w}{w}
\DeclareMathOperator{\Y}{\mathcal{Y}}
\newcommand{\f}{f}
\DeclareMathOperator{\supp}{\text{supp}}
\DeclareMathOperator*{\argmax}{arg\,max}
\DeclareMathOperator*{\argmin}{arg\,min}
\newcommand{\high}[1]{^{(#1)}}
\algnewcommand{\parState}[1]{\State
	\parbox[t]{\dimexpr\linewidth-\algmargin}{\strut #1\strut}}
\title{Computing T-optimal designs via nested semi-infinite programming and twofold adaptive discretization}
\author{David Mogalle
	\footnote{Fraunhofer Institute for Industrial Mathematics (ITWM), Fraunhofer-Platz 1, 67663 Kaiserslautern, Germany}
	\footnote{Corresponding author: David.Mogalle@itwm.fraunhofer.de} , 
	Philipp Seufert$^*$,
	Jan Schwientek$^*$,
	Michael Bortz$^*$,
	Karl-Heinz K\"{u}fer$^*$}
\begin{document}
	
	\maketitle
	
	\begin{abstract}
		Modeling real processes often results in several suitable models. In order to be able to distinguish, or discriminate, which model best represents a phenomenon, one is interested, e.g., in so-called T-optimal designs. These consist of the (design) points from a generally continuous design space at which the models deviate most from each other, under the condition that they are best fitted to those points. Thus, the T-criterion represents a bi-level optimization problem, which can be transferred into a semi-infinite one, but whose solution is very unstable or time consuming for non-linear models and non-convex lower- and upper-level problems.
		
		If one considers only a finite number of possible design points, a numerically well tractable linear semi-infinite optimization problem arises. Since this is only an approximation of the original model discrimination problem, we propose an algorithm which alternately and adaptively refines discretizations of the parameter as well as of the design space and, thus, solves a sequence of LSIPs. We prove convergence of our method and its subroutine and show on the basis of discrimination tasks from process engineering that our approach is stable and can outperform the known methods.
	\end{abstract}

	\noindent \textbf{Keywords:} model discrimination, multi-level optimization, semi-infinite optimization discretization, global optimization, bio- and chemical process engineering
	
    \vspace{5pt}
    
	\noindent \textbf{MSC2020:} 65K05, 90C26, 90C30, 90C34, 90C47
	
	\section{Introduction}
\label{sec:intro}

\subsection{Motivation}

In science, one uses mathematical formulas to describe processes or phenomena. For example in chemical process engineering there exists a plethora of models that describe the course of a reaction.

Often one can make an educated guess based on experience or prior knowledge which model fits an investigated phenomenon. Still it might happen that there are several plausible models. In order to decide which of these models fits the true underlying phenomenon best, we want to conduct further experiments. Then, based on these additional experiments, we can hopefully clearly distinguish, i.e.\ discriminate, the suggested models and pick the one that suits best.

However, such experiments can be very time consuming and costly. For example, a large batch reactor needs time and energy until it reaches the right temperature and pressure to conduct an experiment. Therefore, if we carry out costly experiments, we want to do as few experiments as possible. Thus, every chosen experiment should carry as much information as attainable.

The task of determining such experiments is generally called \textit{design of experiments (DoE)}. In this article we focus on \textit{model discrimination (MD)}, which is a sub-problem in the family of DoE problems: Given two plausible models $f_1$ and $f_2$, as well as perhaps some existing data $\mathcal{D}_N$, we want to find settings for new experiments that give insight whether $f_1$ or $f_2$ better fits the real phenomenon.

\subsection{Literature}

In order to evaluate whether an experiment is worthwhile or not, several criteria have been introduced. Consequently, optimal settings for new experiments can be determined by optimizing these criteria.

For model discrimination specifically, the first recorded criterion was presented by Hunter and Reiner \cite{hunter1965designs}. They proposed fitting the two competing response models $\f_1$ and $\f_2$ to some given data $\mathcal{D}_N$, and then performing the experiment where the models differ the most after the fit.

One issue with the Hunter-Reiner criterion is its implicit assumption that the measurement errors for both models have constant variance. To circumvent this problem, Box and Hill \cite{box1967discrimination} proposed a Bayesian sequential discrimination method based on the \textit{Kullback-Leibler divergence} (short: KL-divergence, see, e.g., \cite{bishop2006pattern}). Additionally, they formulated their method in a way such that several competing models can be discriminated.

Then in 1975, Atkinson and Fedorov \cite{atkinson1975a} introduced the \textit{T-optimality criterion}, or short \textit{T-criterion}. The idea for this criterion is to fix one of the models, while fitting the other model as good as possible to the first one by a weighted least-squares sum. Afterwards, one selects the points with the largest difference between the models as the proposed experiments. 

In the same year, Atkinson and Fedorov also introduced an extension for discriminating several models at once \cite{atkinson1975b}.

Theoretic properties of the T-criterion have been investigated in further publications. A good first overview of the most important characteristics can be found in \cite{FedDoe2013}. A comprehensive analysis is included in the dissertation by Kuczewski \cite{kuczewski2006diss}. Additionally, \cite{dette2009titoff} states the duality of the T-criterion and the Chebychev approximation problem, which was later used in \cite{braess2013optimal} to further characterize T-optimal solutions.

An extension of the T-criterion for discriminating dynamic systems that depend on end times and initial values has been introduced in \cite{ucinski2005t}. Recently, a further generalization of the T-criterion for semi-parametric models has been presented by \cite{dette2018optimal}.

An important advancement of the T-criterion was introduced by López-Fidalgo et al.\ \cite{lopez2007optimal}: the \textit{KL-optimality criterion}. In fact, the idea for the KL-criterion is the same as for the T-criterion, but in order to fit one model to the other they use the Kullback-Leibler divergence instead of a sum of least-squares. The advantage of this criterion is that we can deal with models that assume non-Gaussian measurement errors, which the T-criterion cannot do out of the box. 

In order to compute optimal solutions for the T-criterion, called \textit{T-optimal designs}, Atkinson and Fedorov \cite{atkinson1975a} used an algorithm that is based on the Vector Direction Method \cite{FedDoe2013, WynnAlgorithm}. But since then, many more strategies have been successfully applied. Kuczewski \cite{kuczewski2006diss} and Duarte et al. \cite{duarte2015semi} both formulated the T-criterion as a semi-infinite program (SIP) and solved it using the general Blankenship \& Falk algorithm. Braess and Dette \cite{braess2013optimal} used results from approximation theory to formulate an algorithm which alternatingly finds new points by solving the dual Chebyshev approximation problem and calculates new weights for a new solution of the primal T-criterion. Later, Dette et al.\ \cite{dette2015bayesian} simplified this approach by finding several new design points using the directional derivative from the Vector Direction Method, and computing the corresponding new weights for the current selection of design points with the help of a linearization. A heuristic approach based on particle swarm optimization has been successfully applied by Chen et al. \cite{chen2020hybrid}.

In the special case where we discriminate two polynomial models, Yue et al. \cite{yue2019t} presented an approach based on moment relaxation and semi-definite programming to find T-optimal solutions. In the even more special case where the degree of the two competing polynomial models differ by exactly two, Dette et al. \cite{dette2012t} were even able to give an analytic approach for calculating T-optimal designs. Additional investigation when discriminating polynomial, or rational models has been done by \cite{guchenko2017t}.

\subsection{Outline}

In this paper, we present a new algorithm for computing T-optimal designs based on two nested adaptive discretization schemes. At first, we introduce some preliminaries in Section \ref{sec:prelim}. This includes basic notations from design theory, and selected insights on the T-criterion. Then, we introduce our new algorithm in Section \ref{sec:alg}. This section is separated into two parts, where the first considers the inner discretization scheme, while the second considers the outer one. We also show that our algorithm convergences towards arbitrary good approximations of T-optimal designs under realistic assumptions in Section \ref{sec:conv}. Finally in Section \ref{sec:num}, we compare our algorithm against three other state-of-the-art methods on two examples from chemical process engineering.

	\section{Preliminaries}
\label{sec:prelim}

We start by briefly introducing some basic notations from design theory and presenting the main characteristics for T-optimal experimental designs.

\subsection{Fundamentals of Design Theory}

A model function in the following context is of the form $\f: \X\times\,\Para\to\Y$. It maps \textit{inputs} or \textit{design points} $x$ to their respective \textit{outputs} or \textit{responses} $y$ and can be tweaked by parameters $\para$. 

We call $\X$ the \textit{design space}, $\Y$ the \textit{output space}, and $\Para$ the \textit{parameter space}.
These sets are assumed to be subsets of the multi-dimensional real numbers, i.e.\ $\Para\subseteq\mathbb{R}^{d_\para}$, $\X\subseteq\mathbb{R}^{d_x}$, and $\Y\subseteq\mathbb{R}^{d_y}$.
Additionally, if the number of outputs is one, i.e.\ $d_y = 1$, we call $f$ a \textit{single-response} model, whereas if it has more than one output, i.e.\ $d_y > 1$, then we call $f$ a \textit{multi-response} model.
%We assume further that each input-output pair $(x, y)$ follows a model $\f$, i.e.\ $y = \f(x,\para_t) + \varepsilon$ with true parameters $\para_t\in\Para$, and some random error $\varepsilon$ with expectation $\E[\varepsilon] = 0$. 

An experiment in this context coincides with a design point $x\in\X$. This point gives the settings for the corresponding experiment, like temperature, pressure, initial concentrations, etc.

As introduced in \cite{FedDoe2013}, we model an \textit{experimental design plan} as a probability measure. In case of a discrete measure, we write
$$\xi = \left\lbrace \begin{array}{ccc}
	x_1 & \ldots & x_N \\ 
	w_1 & \ldots & w_N
\end{array}  \right\rbrace,$$
with
\[w_i\geq0,\qquad\Vert\w\Vert_1 = \sum_{i=1}^N w_i = 1.\]
Here, the $x_i\in\X$ are the suggested experiments, and the weights $w_i$ represent the relative number of experiments being proposed for their respective design points. The \textit{set of experimental designs}, which coincides with the set of probability measures on $\X$, is denoted by $\Xi$, or $\Xi(\X)$.

%Here, the $x_i\in\X$ are the proposed experiments, and the weights $w_i\in\mathbb{Q}_{>0}$ are the relative number of experiments being done at their respective design points. Additionally, we introduce the \textit{support} of a design as all design points with positive weight, i.e.\
%$$\supp(\xi) = \{x_1,\ldots,x_n\}.$$
%The \textit{set of experimental designs} is denoted by $\Xi$.
%
%The concept of experimental designs was introduced by Kiefer \todo{CITE} and requires fractions for the weights and only states finitely many design points. However, one often works with a relaxation where the weights can be real numbers $w_i\in[0,1]$ and where we can have arbitrary many design points. This makes optimizing with respect to designs much easier since they are now probability measures.
%Accordingly, the set of all experimental designs $\Xi$ coincides with the set of probability measures on $\X$. Due to the dependency of $\Xi$ on $\X$, we may also write $\Xi(\X)$ for the set of designs (i.e.\ probability measures) on $\X$.

%In particular, we might be interested to conduct a certain experiment several times.

\subsection{The T-criterion}

We now rigorously introduce the so-called T-optimality criterion, which aims to find several design points with maximum squared difference between two competing models $f_1$ and $f_2$.

\begin{defi}[T-Optimality Criterion]
	\label{defi:Topt}
	Assume the first model $\f_1$ to be true, i.e.\ having fixed parameters $\para_1$. Write $\f_1(x) = \f_1(x,\para_1)$. The alternative model $\f_2$ stays parametrized with parameters $\para_2\in\Para_2$. Then, the \textit{T-optimality criterion}, or short \textit{T-criterion}, for model discrimination is defined as
	\begin{alignat*}{2}
		T(\xi) &:= \min_{\para_2\in\Para_2} \int_{\X} \left[\f_1(x) - \f_2(x,\para_2)\right]^2 \xi(dx)\\
		&~=\min_{\para_2\in\Para_2}\, \sum_{i=1}^n \w_i \left[\f_1(x_i) - \f_2(x_i,\para_2)\right]^2
	\end{alignat*}
	for a design $\xi = \left\lbrace (x_i, \w_i)\right\rbrace_{i=1,\ldots,n}$. A design $\xi^*$ is called \textit{T-optimal} if 
	\begin{align}
		\label{def:Topt}
		\xi^* = \argmax_{\xi\in\Xi} ~T(\xi) = \argmax_{\xi\in\Xi} \min_{\para_2\in\Para_2} \int_{\X} \left[\f_1(x) - \f_2(x,\para_2)\right]^2 \, \xi(dx).
	\end{align}
\end{defi}

%One advantage of the T-criterion in comparison to sequential design criteria like the Hunter-Reiner or the Box-Hill criterion is that it directly yields a design plan and not just a single experiment. Another advantage is that the criterion is less dependent from potentially skewed data. This is due to the definition of the T-criterion, where we fit the parameters $\para_2$ of the alternative model $f_2$ to the first model $f_1$. In other words, we fit to the model and not to the data.

One benefit of the T-criterion is its relative independence from data. In fact, if we set the true parameters $\para_1$ of the first model based on prior assumptions, then we can even compute a T-optimal design without any data at all. Nevertheless, if data $\mathcal{D}_N = \{(x_i, y_i)\mid i=1,\ldots,N\}$ is available, it is still best practice to use this data in order to estimate $\para_1$, e.g.\ by the method of least-squares.

To work with the T-criterion, we introduce the following notations.

\begin{nota}
	We define the squared model difference as
	\[\varphi(x,\para_2) := \left[\f_1(x) - \f_2(x,\para_2)\right]^2.\]
	We further define the function
	\[T(\xi,\para_2) := \int_{\X} \varphi(x, \para_2) ~\xi(dx),\]
	and the optimal parameter with respect to a given design $\xi$ as
	%	\[\hat{\para}_2 := \hat{\para}_2(\xi) := \argmin_{\para_2\in\Para_2} \int_{\X}\varphi(x, \para_2)~\xi(dx).\]
	\[\hat{\para}_2 := \hat{\para}_2(\xi) := \argmin_{\para_2\in\Para_2} ~T(\xi,\para_2).\]
	With these notations, we may also write the T-criterion as
	$T(\xi) = T(\xi, \hat{\para}_2).$
\end{nota}

One issue is that the optimal parameters $\hat{\para}_2$ are not necessarily unique. This cannot be avoided in general, e.g.\ for designs with a single support point. % for some $x\in\X$.
Nevertheless, we are mostly interested in designs that contain several design points. And for those, in practice, one would assume that they have in fact a unique solution to the corresponding least-squares problem. For example, linear models have a unique solution to the least-squares problem if the number of support points equals the number of parameters. This justifies the notion of regular designs.

\begin{defi}(Regular Design)
	A design $\xi$ is called \textit{regular} if the corresponding weighted least-squares problem $\min_{\para_2\in\Para_2} T(\xi,\para_2)$ has a unique minimizer $\hat{\para}_2(\xi)$.
\end{defi}

We further use the following assumptions on the model functions such that we can guarantee the existence of T-optimal designs and other nice properties like concavity (see e.g.\ \cite{FedDoe2013}).

\begin{assum}
	\label{ass:model}
	We assume the following properties on a single-response model $\f:\X\times\,\Para \to \mathbb{R}$:
	\begin{itemize}
		\item[(M1)] $\X\neq\emptyset$ is compact.
		%		\item[(M2)] $\f$ is continuously differentiable with respect to $\para$, and the Jacobian $D_\para \f(x,\para)$ is continuous with respect to $x$.
		\item[(M2)] $f$ is continuous with respect to $(x,\para) \in \X\times\,\Para$.
		\item[(M3)] $\Theta_2\neq\emptyset$ is compact and convex.
	\end{itemize}
\end{assum}

%If not otherwise stated, we will always assume that that assumptions (M1)-(M3) are satisfied.

Another important concept when optimizing the T-criterion are directional derivatives. But beforehand, we need the following notation for designs with a single support point.

\begin{nota}
	We denote the probability measure with full weight on one design point $x\in\X$ as $\xi_x$, i.e.\ $\xi_x := \{(x,1)\}$.
\end{nota}

\begin{prop}[Directional Derivative of the T-Criterion]
	\label{prop:dirDerTcrit}
	The right-hand directional derivative for the T-criterion for some regular design $\xi$ in direction $\bar{\xi}-\xi$ exists and is given by
	\[\delta^+ \, T(\xi,\bar{\xi}-\xi) = \int_{\X}\varphi(x,\hat{\para}_2(\xi))~\bar{\xi}(dx) - T(\xi,\hat{\para}_2(\xi)).\]
	In particular, it holds
	\begin{align*}
		&T\big((1-\alpha)\xi + \alpha\bar{\xi}, \; \hat{\para}_2((1-\alpha)\xi + \alpha\bar{\xi})\big) \\
		&\qquad\qquad = T(\xi, \hat{\para}_2(\xi)) + \alpha \int_{\X}\psi(x,\xi;\hat{\para}_2(\xi)) ~\bar{\xi}(dx) + \BigO(\alpha^2),
	\end{align*}
	where the (right-hand) directional derivative in direction $\xi_x-\xi$ for any design point $x\in\X$ is given by
	\[\psi(x,\xi) := \psi(x,\xi;\hat{\para}_2(\xi)) := \delta^+ \, T(\xi,\xi_x-\xi) = \varphi(x,\hat{\para}_2(\xi)) - T(\xi,\hat{\para}_2(\xi)).\]
\end{prop}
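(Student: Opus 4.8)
The plan is to recognize $T(\xi)=\min_{\para_2\in\Para_2}T(\xi,\para_2)$ as the value function of a parametric minimization problem and to compute its one-sided directional derivative by a Danskin/envelope-type argument. The structural fact I would exploit is that, for each fixed $\para_2$, the map $\xi\mapsto T(\xi,\para_2)=\int_\X\varphi(x,\para_2)\,\xi(dx)$ is \emph{linear} in the design measure. Writing $\xi_\alpha:=(1-\alpha)\xi+\alpha\bar\xi$ for $\alpha\in[0,1]$, linearity gives $T(\xi_\alpha,\para_2)=(1-\alpha)\,T(\xi,\para_2)+\alpha\,T(\bar\xi,\para_2)$, so the problem reduces to analyzing $v(\alpha):=\min_{\para_2}\big[(1-\alpha)T(\xi,\para_2)+\alpha T(\bar\xi,\para_2)\big]$ as $\alpha\to0^+$. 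I would then establish matching upper and lower bounds for the difference quotient $\tfrac{v(\alpha)-v(0)}{\alpha}$.

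For the \emph{upper bound} I would simply insert the fixed optimizer $\hat{\para}_2(\xi)$ into the minimum: since $v(\alpha)\le T(\xi_\alpha,\hat{\para}_2(\xi))=T(\xi,\hat{\para}_2(\xi))+\alpha\big[T(\bar\xi,\hat{\para}_2(\xi))-T(\xi,\hat{\para}_2(\xi))\big]$ while $v(0)=T(\xi,\hat{\para}_2(\xi))$, dividing by $\alpha$ yields $\tfrac{v(\alpha)-v(0)}{\alpha}\le T(\bar\xi,\hat{\para}_2(\xi))-T(\xi,\hat{\para}_2(\xi))$ for every $\alpha>0$, hence the same bound on the $\limsup$. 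The right-hand side is exactly $\int_\X\varphi(x,\hat{\para}_2(\xi))\,\bar\xi(dx)-T(\xi,\hat{\para}_2(\xi))$, the claimed value. For the \emph{lower bound}, let $\para_2^\alpha:=\hat{\para}_2(\xi_\alpha)$. Then $v(\alpha)=(1-\alpha)T(\xi,\para_2^\alpha)+\alpha T(\bar\xi,\para_2^\alpha)\ge(1-\alpha)T(\xi,\hat{\para}_2(\xi))+\alpha T(\bar\xi,\para_2^\alpha)$, using $T(\xi,\para_2^\alpha)\ge T(\xi,\hat{\para}_2(\xi))=v(0)$ by optimality of $\hat{\para}_2(\xi)$ on $\xi$, so that $\tfrac{v(\alpha)-v(0)}{\alpha}\ge T(\bar\xi,\para_2^\alpha)-T(\xi,\hat{\para}_2(\xi))$. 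It then remains to show $\liminf_{\alpha\to0^+}T(\bar\xi,\para_2^\alpha)\ge T(\bar\xi,\hat{\para}_2(\xi))$.

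The main obstacle is precisely this convergence of minimizers, $\para_2^\alpha\to\hat{\para}_2(\xi)$. I would argue as follows: by (M3) $\Para_2$ is compact, so any sequence $\alpha_n\to0^+$ admits a subsequence along which $\para_2^{\alpha_n}\to\para_2^*\in\Para_2$; by (M1)--(M2) the integrand $\varphi$ is continuous and bounded on the compact $\X\times\Para_2$, so $T(\xi,\cdot)$ and $T(\bar\xi,\cdot)$ are continuous, and passing to the limit in $T(\xi_{\alpha_n},\para_2^{\alpha_n})\le T(\xi_{\alpha_n},\para_2)$ gives $T(\xi,\para_2^*)\le T(\xi,\para_2)$ for all $\para_2$, i.e.\ $\para_2^*$ minimizes $T(\xi,\cdot)$. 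Here regularity of $\xi$ enters decisively: the minimizer is \emph{unique}, so $\para_2^*=\hat{\para}_2(\xi)$; as every subsequential limit equals $\hat{\para}_2(\xi)$, the full family converges. Continuity then gives $T(\bar\xi,\para_2^\alpha)\to T(\bar\xi,\hat{\para}_2(\xi))$, closing the gap and proving existence of $\delta^+T(\xi,\bar\xi-\xi)$ together with the stated formula. The special case $\bar\xi=\xi_x$ immediately yields $\psi(x,\xi)=\varphi(x,\hat{\para}_2(\xi))-T(\xi,\hat{\para}_2(\xi))$.

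Finally, for the sharper $\BigO(\alpha^2)$ remainder in the expansion, existence of the directional derivative alone only gives an $o(\alpha)$ error, so I would additionally invoke smoothness of $\varphi(x,\cdot)$ together with the second-order (positive-definite Hessian) information that regularity supplies at the interior minimizer. An implicit-function/Taylor argument applied to the stationarity condition for $\para_2^\alpha$ then shows $\para_2^\alpha-\hat{\para}_2(\xi)=\BigO(\alpha)$, which upgrades the leftover term $\alpha\,\big[T(\bar\xi,\para_2^\alpha)-T(\bar\xi,\hat{\para}_2(\xi))\big]$ from $o(\alpha)$ to $\BigO(\alpha^2)$; concavity of $v$ (a minimum of functions affine in $\alpha$) conveniently controls the remainder from the other side.
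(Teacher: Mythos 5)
The paper does not actually prove this proposition --- it defers entirely to Kuczewski's dissertation --- so your Danskin-type argument is a genuinely self-contained alternative, and its core is correct. Linearity of $\xi\mapsto T(\xi,\para_2)$ reduces everything to the scalar value function $v(\alpha)$; the upper bound obtained by inserting the fixed minimizer $\hat{\para}_2(\xi)$ and the lower bound via $T(\xi,\para_2^\alpha)\geq T(\xi,\hat{\para}_2(\xi))$ are both right; and the compactness/continuity/uniqueness argument forcing $\para_2^\alpha\to\hat{\para}_2(\xi)$ --- which is exactly where regularity enters --- correctly closes the sandwich, yielding existence of $\delta^+T(\xi,\bar{\xi}-\xi)$ with the stated value. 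The formula for $\psi$ then follows as the special case $\bar{\xi}=\xi_x$. This buys a transparent proof from first principles where the paper only offers a citation.

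One caveat concerns your final paragraph. You assert that regularity ``supplies'' a positive-definite Hessian at an interior minimizer, but it does not: the paper's definition of a regular design only asserts \emph{uniqueness} of $\hat{\para}_2(\xi)$, which is compatible with a degenerate Hessian (e.g.\ a quartic minimum) and with $\hat{\para}_2(\xi)$ lying on the boundary of $\Para_2$; moreover Assumption (M2) gives only continuity of $f_2$ in $\para_2$, not differentiability. Hence the step $\para_2^\alpha-\hat{\para}_2(\xi)=\BigO(\alpha)$, and with it the $\BigO(\alpha^2)$ remainder, genuinely requires hypotheses beyond (M1)--(M3) and regularity as defined. You are right that the first-order argument alone yields only $o(\alpha)$ --- concavity of $v$ controls the remainder from one side only, and $v(\alpha)=-\alpha^{3/2}$ shows that $o(\alpha)$ cannot be upgraded for free --- so this is really a gap in the proposition as stated (the paper silently assumes the needed smoothness) rather than a flaw specific to your argument; but you should state the extra smoothness and nondegeneracy assumptions explicitly rather than attribute them to regularity.
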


The existence of the directional derivative is derived in, e.g., \cite{kuczewski2006diss}.

We are now able to state the Equivalence Theorem for the T-criterion, which gives a necessary and sufficient criterion for T-optimal solutions.

\begin{theorem}[Equivalence Theorem for the T-Criterion]
	\label{thm:equThm}
	The following statements hold:
	\begin{enumerate}
		\item A design $\xi^*$ is T-optimal if and only if there exists a measure $\zeta$ on the parameters such that
		\begin{equation}
			\label{expr:Tcrit:EqThm}
			\int_{\hat{\Para}_2(\xi^*)} \varphi(x, \hat{\para}_2) ~\zeta(d\hat{\para}_2) \leq T(\xi^*), ~\forall x\in\X,
		\end{equation}
		where $\hat{\Para}_2(\xi) := \argmin_{\para_2\in\Para_2} \int_{\X} \varphi(x,\para_2) ~\xi(dx)$ is the set of optimal parameters w.r.t.\ $\xi$, and $\zeta$ is a probability measure on $\hat{\Para}_2(\xi)$, i.e.\
		%		$\hat{\Para}_2(\xi) = \left\{\hat{\para}_2 \mid \hat{\para}_2 = \argmin_{\para_2\in\Para_2} T(x, \para_2)\right\}$,
		%		and
		$\zeta\big(\hat{\Para}_2(\xi)\big) = 1.$
		%		\[\int_{\Para_2(\xi)} \zeta(d\para_2) = 1.\]	
		\item The inequality \eqref{expr:Tcrit:EqThm} holds with equality for all support points of a T-optimal design $\xi^*$.	
		\item The set of optimal designs is convex.
	\end{enumerate}
	\label{thmGlobOptCond}
\end{theorem}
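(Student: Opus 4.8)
The plan is to exploit that $T$ is concave on the convex set $\Xi$, since $T(\xi)=\min_{\para_2\in\Para_2}T(\xi,\para_2)$ is a pointwise infimum of the maps $\xi\mapsto T(\xi,\para_2)$, each of which is affine in $\xi$. For a concave functional on a convex set, $\xi^*$ is a maximizer if and only if the right-hand directional derivative satisfies $\delta^+T(\xi^*,\bar\xi-\xi^*)\le 0$ for every $\bar\xi\in\Xi$. First I would record the directional derivative in the general, possibly non-regular, case: by a Danskin/envelope-type argument applied to the infimum over $\para_2$, one obtains
\[\delta^+T(\xi^*,\bar\xi-\xi^*)=\min_{\para_2\in\hat\Para_2(\xi^*)}\Big[\int_\X\varphi(x,\para_2)\,\bar\xi(dx)-T(\xi^*)\Big],\]
using that $T(\xi^*,\para_2)=T(\xi^*)$ for every $\para_2\in\hat\Para_2(\xi^*)$. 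This generalizes Proposition~\ref{prop:dirDerTcrit}, which covers only the singleton case. Optimality of $\xi^*$ is thus equivalent to $\min_{\para_2\in\hat\Para_2(\xi^*)}\int_\X\varphi(x,\para_2)\,\bar\xi(dx)\le T(\xi^*)$ for all $\bar\xi\in\Xi$.

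For necessity (part~1, ``$\Rightarrow$''), the crux is to pass from this family of inequalities---one for each competitor $\bar\xi$, with the minimizing $\para_2$ depending on $\bar\xi$---to the existence of a \emph{single} probability measure $\zeta$ on $\hat\Para_2(\xi^*)$ that dominates $T(\xi^*)$ uniformly in $x$. I would set up the bilinear payoff $K(\mu,\zeta)=\int_\X\int_{\hat\Para_2(\xi^*)}\varphi(x,\para_2)\,\zeta(d\para_2)\,\mu(dx)$ over the weak-$*$ compact convex sets of probability measures $\mu\in\Xi(\X)$ and $\zeta\in\Xi(\hat\Para_2(\xi^*))$, and apply Sion's minimax theorem; continuity and compactness come from (M1)--(M3) and continuity of $\varphi$. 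The inner values are $\max_\mu K=\max_{x\in\X}\int\varphi(x,\para_2)\,\zeta(d\para_2)$ (the extreme points of $\Xi(\X)$ are point masses) and $\min_\zeta K=\min_{\para_2\in\hat\Para_2(\xi^*)}\int_\X\varphi(x,\para_2)\,\mu(dx)$. The optimality condition says precisely that $\max_\mu\min_\zeta K=T(\xi^*)$, the value being attained at $\mu=\xi^*$, so minimax furnishes a $\zeta^*$ with $\max_{x}\int\varphi(x,\para_2)\,\zeta^*(d\para_2)=T(\xi^*)$, which is exactly \eqref{expr:Tcrit:EqThm}.

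For sufficiency (``$\Leftarrow$'') the argument reverses cheaply: given such a $\zeta$, integrating \eqref{expr:Tcrit:EqThm} against an arbitrary $\bar\xi$ and applying Fubini yields $\int_{\hat\Para_2(\xi^*)}T(\bar\xi,\para_2)\,\zeta(d\para_2)\le T(\xi^*)$, while $T(\bar\xi)=\min_{\para_2\in\Para_2}T(\bar\xi,\para_2)$ is at most this $\zeta$-average; hence $T(\bar\xi)\le T(\xi^*)$ for all $\bar\xi$. Part~2 follows by integrating \eqref{expr:Tcrit:EqThm} against $\xi^*$ itself: Fubini together with $T(\xi^*,\para_2)=T(\xi^*)$ on $\hat\Para_2(\xi^*)$ forces the $\xi^*$-integral of the continuous map $x\mapsto\int\varphi(x,\para_2)\,\zeta(d\para_2)$ to equal its pointwise upper bound $T(\xi^*)$, so equality must hold on every support point, as strict inequality would persist on a neighborhood of positive $\xi^*$-measure. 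Part~3 is immediate from concavity: a convex combination of two maximizers has $T$-value at least the convex combination of the optimal values, hence is again optimal.

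The main obstacle is the necessity step: establishing the correct directional-derivative formula when $\hat\Para_2(\xi^*)$ is not a singleton, so that Proposition~\ref{prop:dirDerTcrit} does not apply verbatim, and, above all, converting the $x$-wise optimality inequalities into one uniform measure $\zeta$. The latter is genuinely a saddle-point phenomenon; verifying the hypotheses of Sion's theorem (weak-$*$ compactness of the measure sets, joint continuity of $K$) and identifying the inner extrema with the quantities in \eqref{expr:Tcrit:EqThm} is where the real work lies.
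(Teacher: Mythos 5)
The paper does not prove this theorem at all---it simply cites Kuczewski's dissertation---so there is no in-paper argument to match; what you give is a self-contained proof along the classical Atkinson--Fedorov lines (concavity of $T$ as a pointwise infimum of linear functionals, directional-derivative optimality condition, then a minimax theorem to extract a single mixing measure $\zeta$), and it is sound. The two points you flag as the ``real work'' are indeed the only places needing care, and both go through: the Danskin-type formula for $\delta^+T(\xi^*,\bar\xi-\xi^*)$ holds because $T(\cdot,\para_2)$ is linear in $\xi$ and $\hat\Para_2(\xi^*)$ is a nonempty compact subset of $\Para_2$ (closedness of the argmin follows from continuity of $\para_2\mapsto T(\xi^*,\para_2)$ under (M2)--(M3), and the upper semicontinuity of the argmin map gives the matching lower bound on the difference quotient); and Sion's theorem applies since $K$ is bilinear and weak-$*$ continuous on the product of two weak-$*$ compact convex sets of probability measures, with attainment of $\min_\zeta\max_\mu K$ following from lower semicontinuity on a compact set. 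Two small remarks: for the ``$\Leftarrow$'' direction your Fubini argument does not actually need $\zeta$ concentrated on $\hat\Para_2(\xi^*)$ (any probability measure on $\Para_2$ satisfying \eqref{expr:Tcrit:EqThm} certifies optimality), whereas Part~2 genuinely uses $\zeta(\hat\Para_2(\xi^*))=1$ so that $\int T(\xi^*,\para_2)\,\zeta(d\para_2)=T(\xi^*)$; and your identification $\max_\mu\min_\zeta K=T(\xi^*)$ should note explicitly that the value is attained at $\mu=\xi^*$ because every $\para_2\in\hat\Para_2(\xi^*)$ gives $T(\xi^*,\para_2)=T(\xi^*)$. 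What your route buys over the paper's citation is transparency about exactly which assumptions, (M1)--(M3) plus continuity of $\varphi$, are consumed and where; the cost is that the minimax machinery is heavier than strictly necessary for the finite-support designs the paper ultimately works with.
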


\begin{proof}
	See, for example, \cite{kuczewski2006diss}.
\end{proof}

We end this section with a helpful result concerning the existence of T-optimal solutions with finitely many support points.

\begin{prop}[Existence of T-optimal solutions with finite support]
	\label{thm:nrSupp}
	There always exists a T-optimal design $\xi^*$ with at most $\dim(\Theta_2)+1$ many design points.
\end{prop}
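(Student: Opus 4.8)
The plan is to start from an arbitrary T-optimal design and then thin out its support by a Carath\'eodory-type argument, using the Equivalence Theorem (Theorem~\ref{thm:equThm}) to certify that the thinned design is still T-optimal. Existence of at least one T-optimal design $\xi^\ast$ follows from compactness of $\X$ (M1), continuity of $\varphi$ (M2), and concavity of $T$ as an infimum of the maps $\xi\mapsto T(\xi,\para_2)$ that are linear in $\xi$ (see \cite{FedDoe2013}); so the only real content is the bound on the number of support points. Write $T^\ast := T(\xi^\ast)$ and fix one optimal parameter $\hat{\para}_2\in\hat{\Para}_2(\xi^\ast)$.

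The key idea is that T-optimality of $\xi^\ast$ is encoded by finitely many linear conditions on the measure. By Theorem~\ref{thm:equThm}(2) together with Proposition~\ref{prop:dirDerTcrit}, $\xi^\ast$ is supported on the active set $A := \{x\in\X : \psi(x,\xi^\ast)=0\}$, i.e.\ where the directional derivative vanishes. Moreover, since $\hat{\para}_2$ minimizes the map $\para_2\mapsto T(\xi^\ast,\para_2)$ over the convex set $\Para_2$ (M3), the first-order stationarity condition $\int_\X \nabla_{\para_2}\varphi(x,\hat{\para}_2)\,\xi^\ast(dx)=0$ holds (an interior minimum, assuming $\f_2$ is differentiable in $\para_2$; the boundary case is handled by replacing the gradient with a normal-cone/subgradient condition). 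I would then restrict the vector-valued map $x\mapsto \nabla_{\para_2}\varphi(x,\hat{\para}_2)\in\mathbb{R}^{\dim(\Theta_2)}$ to $A$. The zero vector lies in the convex hull of its image, so Carath\'eodory's theorem yields a measure $\tilde{\xi}$ supported on at most $\dim(\Theta_2)+1$ points of $A$ with $\int_\X \nabla_{\para_2}\varphi(x,\hat{\para}_2)\,\tilde{\xi}(dx)=0$.

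It then remains to verify that this $\tilde{\xi}$ is genuinely T-optimal. Because $\tilde\xi$ is supported in $A$, the Equivalence-Theorem certificate $\zeta$ of $\xi^\ast$ still satisfies the equality condition along $\supp(\tilde\xi)$, which fixes the $\zeta$-averaged value $\int_{\hat{\Para}_2(\xi^\ast)} T(\tilde\xi,\para_2)\,\zeta(d\para_2)=T^\ast$; combined with the stationarity of $\hat{\para}_2$ for $\tilde\xi$ this is meant to force $T(\tilde\xi)=T^\ast$, while $T(\tilde\xi)\le T^\ast$ holds automatically since $T^\ast$ is the maximal value.

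I expect the main obstacle to be precisely this last verification, for two reasons. First, the inner problem is non-convex, so stationarity of $\hat{\para}_2$ for $\tilde\xi$ does not by itself guarantee that $\hat{\para}_2$ is a \emph{global} minimizer of $T(\tilde\xi,\cdot)$; the clean way around this is to carry the full dual certificate $\zeta$ through the reduction rather than relying on gradients alone, i.e.\ to preserve the moment conditions $\int_\X\varphi(x,\para_2)\,\tilde\xi(dx)=T^\ast$ simultaneously for every $\para_2\in\supp(\zeta)$. Second, and relatedly, the exact count $\dim(\Theta_2)+1$ is delicate: naively preserving both the optimal value and the $\dim(\Theta_2)$ stationarity coordinates places a point in $\mathbb{R}^{1+\dim(\Theta_2)}$, for which Carath\'eodory would only give $\dim(\Theta_2)+2$ points, so the sharp bound requires either exploiting that the relevant moment point lies on the boundary of the moment body (removing one point) or organizing the reduction so that the value condition is already implied by the active-set membership $\supp(\tilde\xi)\subseteq A$. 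Making one of these two reductions rigorous, in a manner compatible with the non-convex inner problem, is where the actual work lies.
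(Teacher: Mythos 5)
Your overall strategy---a Carath\'eodory reduction certified through the Equivalence Theorem---is the right circle of ideas, and it is essentially the primal-side counterpart of what the paper actually invokes, namely the characterization theorem for best uniform approximation applied via the duality $\max_{\xi\in\Xi}T(\xi)=\min_{\para_2\in\Para_2}\max_{x\in\X}\varphi(x,\para_2)$ of \cite{dette2009titoff}, \cite{braess2013optimal}. But as written the argument has a genuine gap, and it is exactly the one you flag yourself and then leave open: the verification that the thinned measure $\tilde{\xi}$ is still T-optimal is never carried out, and the tools you propose do not suffice to carry it out. What must be shown is $T(\tilde{\xi},\para_2)\geq T^\ast$ for \emph{every} $\para_2\in\Para_2$ (the inequality $T(\tilde{\xi})\leq T^\ast$ is the trivial direction). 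Membership $\supp(\tilde{\xi})\subseteq A$ only yields the averaged identity $\int T(\tilde{\xi},\para_2)\,\zeta(d\para_2)=T^\ast$, which bounds $\min_{\para_2}T(\tilde{\xi},\cdot)$ from \emph{above} by $T^\ast$, i.e.\ it points the wrong way; first-order stationarity of $\hat{\para}_2$ for $\tilde{\xi}$ says nothing about parameters far from $\hat{\para}_2$, because the inner problem is non-convex in $\para_2$ ((M3) makes $\Para_2$ convex, not $T(\xi,\cdot)$); and preserving the moment conditions on $\supp(\zeta)$ controls only finitely many parameter values. Removing mass from $\xi^\ast$ can in principle create a new global minimizer of $T(\tilde{\xi},\cdot)$ with value strictly below $T^\ast$, and nothing in the proposal excludes this. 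The second difficulty you name---that preserving the value condition together with the $\dim(\Theta_2)$ gradient coordinates places the relevant moment point in $\mathbb{R}^{\dim(\Theta_2)+1}$, for which Carath\'eodory gives only $\dim(\Theta_2)+2$ points---is likewise acknowledged but not resolved.

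The paper's (one-line) proof sidesteps both problems simultaneously by working on the dual side: a best uniform approximation $\para_2^\ast$ of $\f_1$ from the family $\{\f_2(\cdot,\para_2)\}$ is characterized by the existence of at most $\dim(\Theta_2)+1$ extremal points of the error together with positive weights whose weighted gradient sum vanishes, and that discrete measure \emph{is} a T-optimal design; its global optimality comes for free from the absence of a duality gap rather than from a local certificate, and the sharp count $\dim(\Theta_2)+1$ is built into the characterization because the value condition is absorbed into extremality rather than counted as an extra coordinate. To repair your primal argument you would need to import that minimax/duality statement explicitly; the Equivalence Theorem plus stationarity alone do not close the loop.
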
 
\begin{proof}
	The statement can be shown by applying the characterization theorem for best uniform approximation, see \cite{braess2013optimal}.
\end{proof}

	\section{The 2-ADAPT-MD Algorithm}
\label{sec:alg}

In the previous section we have seen that optimizing the T-criterion can be formulated as a maximin problem of the form
\begin{equation}
	\label{expr:originalSIP}
	\max_{\xi\in\Xi} \min_{\para_2\in\Para_2} \; \sum_{i=1}^{\dim(\Theta_2)+1} \w_i \left[\f_1(x_i) - \f_2(x_i,\para_2)\right]^2,
\end{equation}
where $\xi$ is a discrete probability measure with $\dim(\Theta_2)+1$ many support points. Thus, it can also be formulated as a semi-infinite problem and solved by the Blankenship \& Falk algorithm (see Appendix \ref{App:SIP}). Similar formulations were also used be Kuczewski \cite{kuczewski2006diss} and Duarte et al.\ \cite{duarte2015semi}. However, the upper-level problem, which has to be solved in every iteration, is 
\begin{align*}
	\label{expr:upperLevelTSIP}
	\max_{\substack{t\in\mathbb{R}\\\w\in\mathbb{R}^N\\x_1,\ldots,x_N \in\X}}\quad &t \nonumber\\
	\text{ s.t.} \quad &\sum_{i=1}^{N} \w_i \left[\f_1(x_i) - \f_2(x_i,\para_2)\right]^2\geq t, \quad \forall\, \para_2\in\dot{\Para}_2,\\
	&\sum_{i=1}^{N} \w_i = 1,\quad \w\geq 0 \nonumber,
\end{align*}
where $N = \dim(\Theta_2)+1$ is the maximum number of required support points. Not only does this problem contain many variables, $(d_x+1)\cdot N + 1$ to be exact, but it is also often highly non-linear in the design points, which makes it hard to solve.

Therefore, we present a new algorithm for computing T-optimal designs, the \textproc{2-ADAPT-MD}. Although we do not eradicate the problems with the SIP approach completely, we mitigate the issues by exploiting the linearity in the weights, and optimizing with respect to the design points as infrequently as possible. 

The idea for the \textproc{2-ADAPT-MD} algorithm is the same as Algorithm 3.2 by Dette et al.\ \cite{dette2015bayesian}: We use a sub-routine to compute T-optimal designs on a finite subset of design points, then we augment this subset by a new design point to improve the solution with regard to the entire set of design points. 
%In order to compute those designs, we again use concepts from semi-infinite programming. 
%This allows us to use strategies that guarantee to find T-optimal solutions in the limit and arbitrary good solutions in finite time.

But unlike the approach by Dette et al., our new method does not rely on a linearization. This is advantageous for cases where the linearization is not good enough of an approximation. Also, our algorithm is able to deal with multi-response models, whereas the linearization formulated by Dette et al.\ is limited to single-response models.

%Unlike the other presented algorithms, we get a robust algorithm that can handle inaccurate solutions for the parameter estimation. This allows us to approximate the weighted least-squares problem by means from Bayesian Optimization later on.

\subsection{The Grid-Based \textproc{DISC-MD} Sub-Routine}

At first we consider the sub-problem to find T-optimal designs on a finite set of design points ${\X_N=\{x_1,\ldots,x_N\} \subset \X}$. This is:
\begin{equation}
	\label{expr:maximin:Tkrit}
	\max_{\substack{{\w\geq0}\\ \Vert\w\Vert_1=1}} \min_{\para_2\in\Para_2} \; \sum_{i=1}^N \w_i \left[\f_1(x_i) - \f_2(x_i,\para_2)\right]^2.
\end{equation}
We again reformulate this maximin problem as a SIP. But this time we get a \textit{linear} SIP:
\begin{align}
	\label{expr:upperLevelSIP}
	\max_{\substack{t\in\mathbb{R}\\\w\in\mathbb{R}^N}}\quad &t \nonumber\\
	\text{ s.t.} \quad &\sum_{i=1}^{N} \w_i \left[\f_1(x_i) - \f_2(x_i,\para_2)\right]^2\geq t, ~\forall \para_2\in\Para_2,\\
	&\sum_{i=1}^{N} \w_i = 1,\quad \w\geq 0 \nonumber.
\end{align}
Notice that in this formulation, we only optimize with respect to the weights (and $t$) since the former inner optimization problem is replaced by infinitely many constraints.

To solve (\ref{expr:upperLevelSIP}), we may use the Blankenship \& Falk algorithm. So we approximate the SIP by considering finitely many constraints, i.e.\ we consider a finite discretization of the parameter set $\dot{\Para}_2 \subset \Para_2$. This yields the upper-level problem
\begin{align}
	\label{expr:upperLevelLP}
	\max_{\substack{\w\in\mathbb{R}^N\\t\in\mathbb{R}}}\quad &t \nonumber\\
	\text{ s.t.} \quad &\sum_{i=1}^{N} \w_i \left[\f_1(x_i) - \f_2(x_i,\para_2)\right]^2\geq t, \quad \forall\, \para_2\in\dot{\Para}_2,\\
	&\sum_{i=1}^{N} \w_i = 1,\quad \w\geq 0 \nonumber,
\end{align}
which is a linear program.
The corresponding lower-level problem aims to find new parameters for the discretization $\dot{\Para}_2$. This is the weighted least-squares problem
\begin{equation}
	\label{expr:WLSforDISC_MD}
	\min_{\para_2\in\Para_2} \; \sum_{i=1}^N w_i  \left[\f_1(x_i) - \f_2(x_i,\para_2)\right]^2,
\end{equation}
where the weights are given by the solution of the upper-level problem (\ref{expr:upperLevelLP}). 

The complete approach is presented in Algorithm \ref{alg:fixGrid}, which we denote by \textproc{DISC-MD}. The name stems from the fact that we assume a discrete (and finite) set $\X_N$ for the set of all design points. 

%We further show that our algorithm converges to T-optimal solutions. This follows from the idea that it is based on the Blankenship \& Falk algorithm.

\begin{algorithm}[thb]
	\caption{Gird-Based \textproc{DISC-MD} Algorithm}
	\label{alg:fixGrid}
	\begin{algorithmic}[1]\doublespacing
		%		\onehalfspacing
		\Require{$s=0$; finitely many design points $\X_N = \{x_1,\ldots,x_N\}$; initial design $\xi\high{0}$; initial discretization $\dot{\Para}_2\high{0}\subset\Para_2$}
		\Procedure{Disc-MD}{$\X_N$, $\xi\high{0}$, $\dot{\Para}_2\high{0}$}
		\Repeat 
		\State obtain $\w\high{s+1}$ by solving (\ref{expr:upperLevelLP}) given discretization $\dot{\Para}_2\high{s}$
		\Statex ~ \Comment{compute new weights (upper-level)}
		\State obtain $\hat{\para}_2\high{s+1}$ by solving (\ref{expr:WLSforDISC_MD}) given weights $\w\high{s+1}$
		\State $\dot{\Para}_2\high{s+1} = \dot{\Para}_2\high{s} \cup \left\{\hat{\para}_2\high{s+1}\right\}$ \Comment{update parameter discretization (lower-level)}
		\State $s := s+1$ \Comment{iterate}
		\Until{convergence criterion is met}\\
		\Return $\hat{\xi} = \left\{\left(x_i, w_i\high{s}\right) \big\vert~ i=1,\ldots,N\right\}$, $\dot{\Para}_2\high{s}$
		\EndProcedure
	\end{algorithmic}
\end{algorithm}

\subsection{The 2-ADAPT-MD Algorithm}

Now we want to find T-optimal designs on a continuous set of design points $\X$. So we want to solve
\begin{equation}
	\label{expr:origSIP}
	\xi^* = \argmax_{\xi\in\Xi(\X)} \; \min_{\para_2\in\Para_2} \int_{\X} \varphi(x, \para_2) \, \xi(dx).
\end{equation}
To do so, we approximate the design set $\X$ by a set of finitely many design points $\X_N$ and compute an optimal solution $\xi^*_N$ on this finite set. This is done by the \textproc{DISC-MD} method which yields
\begin{equation}
	\label{expr:discTcrit}
	\xi^*_N = \argmax_{\xi\in\Xi(\X_N)} \; \min_{\para_2\in\Para_2} \, \sum_{i=1}^N \, \xi(x_i) \, \varphi(x_i, \para_2)
\end{equation}
in the limit. Obviously $T(\xi^*_N) \leq T(\xi^*)$, but for good choices for the design points in $\X_N$ we also get a good approximate solution $\xi_N^*$.

In fact, recall there always exists an optimal design $\xi^*$ with at most $\dim(\Para_2) + 1$ many design points (Theorem \ref{thm:nrSupp}). Finding these few design points such that they are included in the finite set $\X_N$ is sufficient for $T(\xi^*_N) = T(\xi^*)$.

The question is now: How do we find new design points to get better designs? Like with the Vector Direction Method and the algorithm by Dette et al.\ \cite{dette2015bayesian}, we take the design point that maximizes the directional derivative from Proposition \ref{prop:dirDerTcrit}, i.e.\
\begin{equation}
	\label{expr:newPoint}
	x\high{N+1} = \argmax_{x\in\X}\, \psi(x,\xi^*_N) = \argmax_{x\in \X}\, \varphi(x,\hat{\para}_2(\xi^*_N)).
\end{equation}
This point is of interest since it induces a direction of ascent. It is also the point with highest potential to increase the objective $T(\cdot)$ as $\varphi(x,\para_2)$ is an upper bound for the optimal objective value $T(\xi^*)$.

Altogether, we end up with Algorithm \ref{alg:SIPNewApp}, which is called \textproc{2-ADAPT-MD}. The name follows from the idea that we consider \textit{two} discretizations which we \textit{adaptively} improve to compute T-optimal designs for \textit{model discrimination}. On the one hand, we augment the discretization of the parameter set $\dot{\Para}_2$ to get better approximate designs $\hat{\xi}_N$ to the T-optimal design $\xi^*_N$ on $\X_N$. On the other, we determine new points to update $\X_N$ so that $\xi^*_N$ becomes a better approximation to the optimal design $\xi^*$ on $\X.$

\begin{algorithm}[htb]
	\caption{\textproc{2-ADAPT-MD} Algorithm for the T-Criterion}\label{alg:SIPNewApp}
	\begin{algorithmic}[1] \doublespacing
		\Require{finitely many design points $\X\high{0} = \{x_1,\ldots,x_N\}$; initial design $\xi\high{0}$, initial discretization $\dot{\Para}_2\high{0}\subset\Para_2$}
		\Repeat
		\State $\xi\high{n+1}, ~\dot{\Para}_2\high{n+1} = \textsc{Disc-MD}(\X\high{n}, \xi\high{n}, \dot{\Para}_2\high{n})$ \Comment{compute design}
		\State obtain $\hat{\para}_2\high{n+1}$ by solving the LS problem (\ref{expr:WLSforDISC_MD}) given design $\xi\high{n+1}$
		\State $\dot{\Para}_2\high{n+1} := \dot{\Para}_2\high{n+1} \cup \left\{\hat{\para}_2\high{n+1}\right\}$
		\State obtain $x\high{n+1}$ by maximizing the squared distance (\ref{expr:newPoint}) given parameters $\hat{\para}_2\high{n+1}$
		\State $\X\high{n+1} :=
		\X\high{n} \cup \left\{x\high{n+1}\right\}$ \Comment{augment candidate design points}
		\State $n := n+1$ \Comment{iterate}
		\Until{convergence criterion is met}\\
		\Return $\xi\high{n}$
	\end{algorithmic}
\end{algorithm}

\begin{bem}[\textproc{2-ADAPT-MD} for Multi-Response Models]
	\label{bem:novel:2ADA}
	With the \textproc{2-ADAPT-MD} algorithm, we can also compute T-optimal designs for multi-response models $f:\X\times\Para \to \Y$ with $\Y \subseteq \mathbb{R}^{d_y}$ and $d_y > 1$. This is achieved by changing the squared distance $\varphi$ to
	\[\varphi(x,\para_2) = \left\Vert f_1(x) - f_2(x,\para_2) \right\Vert^2.\]
\end{bem}

	\section{Convergence}

\label{sec:conv}

We now show that we can find arbitrary good approximations to T-optimal designs with the \textproc{2-ADAPT-MD} algorithm.

\subsection{Convergence of the DISC-MD Procedure}

Our initial focus is on the \textproc{DISC-MD} sub-routine. We show that we find arbitrary good approximations of the T-optimal design $\xi_N^*$ on the finite design space $\X_N$ by using again results from semi-infinite programming.

\begin{theorem}[Convergence of the DISC-MD Procedure]
	\label{conv_disc_MD}
	Let $\dot{\Para}_2\high{0}$ be a finite discretization of $\Para_2$. Assume that the design $\xi\high{s}$ is regular in each iteration, i.e.\ we find unique minimizers in the least-squares problem in the lower-level. In particular, the initial design $\xi\high{0}$ is regular. Then, every accumulation point of the sequence of solutions $(\xi\high{s})_{s\in\mathbb{N}}$ of Algorithm \ref{alg:fixGrid} is a T-optimal solution on $\X_N$.
\end{theorem}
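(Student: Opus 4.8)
The plan is to recognize Algorithm~\ref{alg:fixGrid} as an instance of the Blankenship~\&~Falk cutting-plane scheme applied to the linear semi-infinite program~(\ref{expr:upperLevelSIP}) on the fixed finite design set $\X_N$, and then to run the standard outer-approximation convergence argument. Throughout I write $T^* := \max_{\xi\in\Xi(\X_N)} T(\xi)$ for the optimal T-value on $\X_N$, $G(\w,\para_2) := \sum_{i=1}^N w_i\,\varphi(x_i,\para_2)$ for the inner function (bilinear, and linear in $\w$), and $v_s := t\high{s+1}$ for the optimal value of the relaxed linear program~(\ref{expr:upperLevelLP}) solved with the discretization $\dot{\Para}_2\high{s}$. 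Two monotonicity facts frame the proof. First, since $\dot{\Para}_2\high{s}\subseteq\Para_2$, the program~(\ref{expr:upperLevelLP}) enforces only a subset of the constraints of~(\ref{expr:upperLevelSIP}), so each relaxation is an outer approximation and $v_s = \max_{\w}\min_{\para_2\in\dot{\Para}_2\high{s}} G(\w,\para_2) \ge \max_{\w}\min_{\para_2\in\Para_2} G(\w,\para_2) = T^*$. Second, since $\dot{\Para}_2\high{s}\subseteq\dot{\Para}_2\high{s+1}$, adding constraints can only shrink the feasible set in $(t,\w)$, so $(v_s)_s$ is non-increasing. Together these give $v_s\downarrow v^*\ge T^*$. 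I also record that, by (M2), (M3), and finiteness of $\X_N$, the map $G$ is jointly continuous on the compact set $\Delta\times\Para_2$ (with $\Delta$ the weight simplex); hence $\hat{\para}_2\high{s+1}$ is attained, $T(\xi)=\min_{\para_2\in\Para_2} G(\w,\para_2)$ is continuous (indeed concave) in $\w$, and accumulation points of $(\xi\high{s})$ exist because $\w$ ranges over the compact $\Delta$.

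The substantive step is to show that the optimality gap $g_s := v_s - T(\xi\high{s+1})$ vanishes; this is the only nontrivial part of the argument, and I expect it to be the main obstacle, everything else being bookkeeping. At the optimum of~(\ref{expr:upperLevelLP}) one has $v_s = \min_{\para_2\in\dot{\Para}_2\high{s}} G(\w\high{s+1},\para_2)$, while $T(\xi\high{s+1}) = G(\w\high{s+1},\hat{\para}_2\high{s+1}) = \min_{\para_2\in\Para_2} G(\w\high{s+1},\para_2)$, so $g_s\ge 0$ is exactly the maximal constraint violation of the relaxed solution. Suppose, for contradiction, that $g_s\ge\varepsilon>0$ along some subsequence. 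The decisive structural fact is that a cut is never forgotten: the parameter $\hat{\para}_2\high{k+1}$ generated at iteration $k$ lies in $\dot{\Para}_2\high{\ell}$ for every $\ell\ge k+1$, so its constraint is active in all later relaxations, giving $G(\w\high{\ell+1},\hat{\para}_2\high{k+1})\ge v_\ell\ge v^*$ whenever $\ell\ge k+1$. Passing, by compactness of $\Delta$ and of $\Para_2$, to a further subsequence along which $\w\high{s+1}\to\bar{\w}$ and $\hat{\para}_2\high{s+1}\to\bar{\para}_2$, I fix $k$ and let $\ell\to\infty$ to obtain $G(\bar{\w},\hat{\para}_2\high{k+1})\ge v^*$, and then let $k\to\infty$ to get $G(\bar{\w},\bar{\para}_2)\ge v^*$ by continuity. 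On the other hand, at generation time $G(\w\high{k+1},\hat{\para}_2\high{k+1}) = T(\xi\high{k+1}) \le v_k-\varepsilon$, and letting $k\to\infty$ yields $G(\bar{\w},\bar{\para}_2)\le v^*-\varepsilon$ by joint continuity, contradicting the previous inequality. Hence $g_s\to 0$.

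Finally I assemble the conclusion. Since $v_s\to v^*$ and $g_s\to 0$, we have $T(\xi\high{s+1}) = v_s-g_s\to v^*$, equivalently $T(\xi\high{s})\to v^*$. Let $\bar{\xi}$ be any accumulation point of $(\xi\high{s})$, say $\xi\high{s_k}\to\bar{\xi}$; continuity of $T(\cdot)$ on $\Delta$ gives $T(\bar{\xi})=\lim_k T(\xi\high{s_k})=v^*\ge T^*$. But $\bar{\xi}\in\Xi(\X_N)$ forces $T(\bar{\xi})\le T^*$ by definition of $T^*$, so $T(\bar{\xi})=T^*$ and $\bar{\xi}$ is T-optimal on $\X_N$ (and, in passing, $v^*=T^*$, so the relaxations converge to the true value). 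The role of the hypotheses is then transparent: (M2)--(M3) together with finiteness of $\X_N$ supply the compactness and continuity used for the attainment of $\hat{\para}_2\high{s+1}$ and for every limiting step, while the assumed regularity of each $\xi\high{s}$ ensures that the least-squares minimizer $\hat{\para}_2\high{s+1}$ is unique, so that the generated cuts, and hence the whole iterate sequence $(\xi\high{s})$, are well defined and the convergence statement is unambiguous.
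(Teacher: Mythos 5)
Your proof is correct, but it takes a different route from the paper. The paper's proof is essentially a one-step reduction: it observes that \textproc{DISC-MD} is exactly the Blankenship \& Falk algorithm applied to the LSIP (\ref{expr:upperLevelSIP}), verifies the single hypothesis of the cited convergence result (Theorem \ref{thm:convBF}), namely that the initial feasible region $\mathcal{F}(\dot{\Para}_2\high{0})$ in $(\w,t)$-space is compact (a bounded closed polytope, since $\w\in[0,1]^N$ and $t$ is bounded above by the maximal constraint value and may be taken nonnegative), and then invokes that theorem. You instead re-derive the cutting-plane convergence argument from scratch: monotonicity and boundedness of the relaxation values $v_s$, the ``cuts are never forgotten'' inequality $G(\w\high{\ell+1},\hat{\para}_2\high{k+1})\geq v_\ell$, and a compactness--continuity limit passage to force the constraint-violation gap $g_s$ to zero. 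Both arguments are sound and rest on the same ingredients (nested discretizations, global solution of the lower-level least-squares problem, compactness of the simplex and of $\Para_2$, joint continuity of $G$ from (M1)--(M3)). The paper's version is shorter and delegates the work to the appendix theorem; yours is self-contained, makes the convergence mechanism explicit, and yields as a by-product the quantitative facts that $v_s\downarrow T^*$ and $g_s\to 0$, which is precisely what legitimizes the practical stopping criterion of the sub-routine. One small remark: as in the paper, your argument needs $\hat{\para}_2\high{s+1}$ to be a \emph{global} minimizer of the weighted least-squares problem (you use $T(\xi\high{s+1})=\min_{\para_2\in\Para_2}G(\w\high{s+1},\para_2)$); the regularity assumption only supplies uniqueness, not globality, so it is worth stating that the lower-level problem is assumed to be solved globally, exactly as the Blankenship \& Falk framework requires.
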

\begin{proof}
	We prove the statement by showing that we satisfy the conditions of Theorem \ref{thm:convBF} (see Appendix \ref{App:SIP}). If its requirements are satisfied, every accumulation point of the sequence of solutions is an optimal solution of the semi-infinite problem (\ref{expr:upperLevelSIP}). Hence, it is a T-optimal solution on $\X_N$.	
	
	So, we have to show that the feasible region of the initial iteration
	\begin{align*}
		\mathcal{F}(\dot{\Para}_2\high{0}) = \Big\{(w,t)\in\mathbb{R}^{N+1} \mid ~&t\leq \sum_{i=1}^N w_i \left[\f_1(x_i) - \f_2(x_i,\para_2) \right]^2, ~\forall \, \para_2\in\dot{\Para}_2\high{0},\\
		&\sum_{i=1}^N w_i = 1, \quad w_i\geq 0\Big\}
	\end{align*}	
	is compact. Since $\mathcal{F}(\dot{\Para}_2\high{0})$ can be defined by finitely many linear inequalities, it is a closed convex polytope. It remains to show that $\mathcal{F}(\dot{\Para}_2\high{0})$ is bounded. The constraints imply 
	${w\in[0,1]^N}$,
	and 
	$$t\leq \max_{\para\in\dot{\Para}_2\high{0}} \, \sum_{i=1}^N \left[\f_1(x_i) - \f_2(x_i,\para_2) \right]^2 := C < \infty.$$
	W.l.o.g., we can further assume
	$t\geq 0.$
	This is due to maximizing with respect to $t$ and all upper bounds of $t$ being positive. Altogether, $\mathcal{F}(\dot{\Para}_2\high{0})$ is bounded and closed in $\mathbb{R}^{N+1}$, i.e.\ compact.
\end{proof}

\subsection{Convergence of the 2-ADAPT-MD Algorithm}

We have already highlighted that the 2-ADAPT-MD algorithm is reminiscent of the algorithm by Dette et al.\ \cite{dette2015bayesian}. In fact, if the \textproc{DISC-MD} routine returned an optimal design on $\X_N$, then Theorem 3.3 from their paper proves convergence. But for this sub-routine, we only have convergence in the limit. Therefore, in finite time, we are only guaranteed that $\hat{\xi}_N$ is an arbitrary good approximation to $\xi^*_N$. Hence, we introduce the notion of $\varepsilon$-T-optimal designs.

\begin{defi}[$\varepsilon$-T-Optimal Designs]
	\label{def:epsTopt}
	A design $\xi$ is called \textit{$\varepsilon$-T-optimal} if
	\[ \max_{x\in\X} \, \psi(x,\xi) \leq \varepsilon.\]
	%	\[ \max_{x\in\X} \, \varphi(x,\hat{\para}_2(\xi)) \leq T(\xi) + \varepsilon.\]
\end{defi}

Definition \ref{def:epsTopt} means that the directional derivative $\psi(x,\xi) = \varphi(x,\hat{\para}_2(\xi)) - T(\xi)$ is small. We also conclude that if we have an $\varepsilon$-T-optimal solution $\xi$, then the objective value of this design $T(\xi)$ is close to the optimal objective value. This justifies our definition for approximate solutions.

\begin{cor}
	If $\xi$ is an $\varepsilon$-T-optimal design and $\xi^*$ is an actual T-optimal design on the same design and parameter spaces, then
	\[T(\xi) \geq T(\xi^*) - \varepsilon.\]
\end{cor}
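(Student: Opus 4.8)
The plan is to convert the $\varepsilon$-optimality hypothesis into a uniform pointwise bound on the squared model difference, and then to test the T-optimal design $\xi^*$ against the (generally suboptimal) parameter $\hat{\para}_2(\xi)$ induced by $\xi$. First I would unfold the directional derivative from Proposition \ref{prop:dirDerTcrit}, namely $\psi(x,\xi) = \varphi(x,\hat{\para}_2(\xi)) - T(\xi)$, so that the assumption $\max_{x\in\X}\psi(x,\xi)\leq\varepsilon$ reads
\[\varphi(x,\hat{\para}_2(\xi)) \leq T(\xi) + \varepsilon \qquad \text{for all } x\in\X.\]

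Next I would exploit that $T(\xi^*)$ is defined as a minimum over $\Para_2$. Since $\hat{\para}_2(\xi)$ is one admissible parameter, we have
\[T(\xi^*) = \min_{\para_2\in\Para_2} T(\xi^*,\para_2) \leq T(\xi^*,\hat{\para}_2(\xi)) = \int_{\X}\varphi(x,\hat{\para}_2(\xi))\,\xi^*(dx).\]
Inserting the pointwise bound from the first step into the integrand and using that $\xi^*$ is a probability measure, so $\int_{\X}\xi^*(dx)=1$, yields $T(\xi^*)\leq T(\xi)+\varepsilon$, which is exactly the claim after rearranging.

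I do not anticipate a genuine obstacle; the result is essentially a direct consequence of the definitions once the correct quantity is formed. The one point worth emphasizing is the asymmetric choice in the second step: one plugs the parameter $\hat{\para}_2(\xi)$, optimal for the approximate design $\xi$, into the optimal design $\xi^*$, and it is precisely this that turns pointwise control of $\varphi$ into control of the objective gap. In fact the same chain of inequalities shows $T(\xi')\leq T(\xi)+\varepsilon$ for \emph{every} design $\xi'\in\Xi$, so $T(\xi)+\varepsilon$ is a global upper bound for the T-criterion; the T-optimality of $\xi^*$ is then invoked only to name the tightest such comparison. The compactness and continuity assumptions (M1)--(M3) enter merely implicitly, to guarantee that $\hat{\para}_2(\xi)$ and the minimizer defining $T(\xi^*)$ exist.
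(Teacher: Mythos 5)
Your proof is correct, and it is a somewhat more self-contained route than the one in the paper. The paper invokes the duality between the T-criterion and the Chebyshev approximation problem (citing Dette--Titoff) to write $T(\xi^*) = \min_{\para_2\in\Para_2}\max_{x\in\X}\varphi(x,\para_2) \leq \max_{x\in\X}\varphi(x,\hat{\para}_2(\xi))$, and then applies the $\varepsilon$-optimality bound. You instead bypass the strong-duality identity entirely: testing $\xi^*$ against the admissible parameter $\hat{\para}_2(\xi)$ gives $T(\xi^*) \leq \int_{\X}\varphi(x,\hat{\para}_2(\xi))\,\xi^*(dx)$, and integrating the pointwise bound $\varphi(x,\hat{\para}_2(\xi))\leq T(\xi)+\varepsilon$ against the probability measure $\xi^*$ finishes the argument. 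In effect you re-derive inline the only direction of the duality that is actually needed (the ``weak'' inequality $\int \varphi\,d\xi^* \leq \max_x \varphi$), so your proof requires no external result and, as you note, yields the stronger conclusion that $T(\xi)+\varepsilon$ bounds $T(\xi')$ for every design $\xi'$, with T-optimality of $\xi^*$ playing no essential role. What the paper's version buys in exchange is brevity and a pointer to the structural fact (the Chebyshev duality) that the authors reuse elsewhere; what yours buys is independence from that cited theorem. Both arguments share the same implicit requirement that $\hat{\para}_2(\xi)$ exists (and is the parameter used in $\psi$), which you correctly flag as resting on Assumptions (M1)--(M3).
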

\begin{proof}
	By the duality of the T-criterion and the Chebyshev approximation problem (see \cite{dette2009titoff}), we have
	\[\max_{x\in\X} \, \varphi(x,\hat{\para}_2(\xi)) \geq \min_{\para_2\in\Para_2} \, \max_{x\in\X} \varphi(x,\para_2) = T(\xi^*).\]
	Therefore,
	\[T(\xi) \geq \max_{x\in\X} \, \varphi(x,\hat{\para}_2(\xi)) - \varepsilon \geq T(\xi^*) - \varepsilon\]
	holds.
\end{proof}

Additionally, we will need the following lemma.

\begin{lem}
	\label{lem:existsOfSol}
	Let Assumptions (M1)-(M3) hold. Let $(\xi_n)_{n\in\mathbb{N}}$ be a sequence of designs such that the objective values converge, i.e.\ $T(\xi_n) \to T^*$ as $n\to\infty$ for some $T^*\in\mathbb{R}$. Then, there exists a solution $\xi^* \in\Xi(\X)$ such that
	\[T(\xi^*) = T^*.\]
\end{lem}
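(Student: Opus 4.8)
The plan is to extract a convergent subsequence of designs using compactness of the space of probability measures on $\X$, and then show that the limit measure achieves the target objective value $T^*$ by exploiting continuity of the relevant functionals. Since $\X$ is compact by (M1), the set $\Xi(\X)$ of probability measures on $\X$ is weak-* compact (e.g.\ by Prokhorov's theorem, or by Banach--Alaoglu applied to the dual of $C(\X)$). Hence the sequence $(\xi_n)_{n\in\mathbb{N}}$ has a weak-* convergent subsequence $\xi_{n_k} \to \xi^*$ for some $\xi^* \in \Xi(\X)$. My target is to prove $T(\xi^*) = T^*$.

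The core of the argument is a continuity statement: the map $\xi \mapsto T(\xi)$ is continuous with respect to weak-* convergence. First I would recall that $T(\xi) = \min_{\para_2\in\Para_2} T(\xi,\para_2)$ with $T(\xi,\para_2) = \int_\X \varphi(x,\para_2)\,\xi(dx)$, where by (M2) the integrand $\varphi(x,\para_2) = [\f_1(x)-\f_2(x,\para_2)]^2$ is continuous in $(x,\para_2)$ on the compact set $\X\times\Para_2$ (compactness of $\Para_2$ from (M3)), hence bounded and uniformly continuous. For each fixed $\para_2$, the linear functional $\xi \mapsto \int_\X \varphi(x,\para_2)\,\xi(dx)$ is weak-* continuous because $\varphi(\cdot,\para_2) \in C(\X)$. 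To pass to the minimum over $\para_2$, I would argue that $\{T(\cdot,\para_2)\}_{\para_2\in\Para_2}$ is an equicontinuous family (uniformly in $\para_2$, using uniform continuity of $\varphi$ on the compact product), so that the pointwise infimum $T(\cdot) = \min_{\para_2} T(\cdot,\para_2)$ is itself weak-* continuous. An alternative that avoids equicontinuity is the standard two-sided bound: for any $\para_2$, $\liminf_k T(\xi_{n_k}) \le \liminf_k T(\xi_{n_k},\para_2) = T(\xi^*,\para_2)$, which after minimizing over $\para_2$ gives $\limsup_k T(\xi_{n_k}) \le T(\xi^*)$, and the reverse inequality follows by selecting near-optimal parameters and invoking compactness of $\Para_2$ together with joint continuity of $(\xi,\para_2)\mapsto T(\xi,\para_2)$.

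Once continuity is established, the conclusion is immediate: along the convergent subsequence we have $T(\xi_{n_k}) \to T(\xi^*)$, while by hypothesis $T(\xi_n) \to T^*$, so every subsequential limit equals $T^*$. Therefore $T(\xi^*) = T^*$, and $\xi^*\in\Xi(\X)$ is the desired design. If one prefers to stay closer to the existence-of-minimizers viewpoint, the same machinery shows that $T$ attains its supremum on the compact set $\Xi(\X)$; but here the cleaner route is simply the continuity of $T$ evaluated at the weak-* limit of a value-convergent sequence.

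I expect the main obstacle to be rigorously justifying weak-* continuity of the \emph{minimized} functional $T(\xi) = \min_{\para_2}T(\xi,\para_2)$, rather than of each $T(\cdot,\para_2)$ separately, since an infimum of continuous functions is in general only upper semicontinuous. The resolution hinges on compactness of $\Para_2$ and joint continuity of $\varphi$ (equivalently, uniform equicontinuity of the family over $\para_2$), which upgrades upper semicontinuity to full continuity; this is exactly where Assumptions (M1)--(M3) do the real work. A secondary technical point is to confirm that the relevant topology on $\Xi(\X)$ is indeed compact and metrizable (as holds for probability measures on a compact metric space), so that working with sequences and subsequences is legitimate.
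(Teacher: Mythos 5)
Your proposal is correct and follows essentially the same route as the paper: extract a weakly convergent subsequence via Prokhorov's theorem (equivalently, weak-* compactness of $\Xi(\X)$ for compact $\X$) and conclude via weak continuity of the T-criterion. The only difference is that the paper cites the weak continuity of $T$ from the literature, whereas you sketch its proof (upper semicontinuity from the pointwise minimum plus the reverse inequality via compactness of $\Para_2$ and uniform continuity of $\varphi$), which is a valid filling-in of that step.
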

\begin{proof}
	The sequence of solutions $(\xi_n)_{n\in\mathbb{N}}$ is obviously tight because the solutions are probability measures defined on the compact set $\X$. But then, by Prokhorov's Theorem (see e.g. \cite{parthasarathy2005probability}), there exists a subsequence $(\xi_{n_k})_{k\in\mathbb{N}}$ which weakly converges to a probability measure $\xi^*$ defined on $\X$. And by the weak continuity of the T-criterion \cite{kuczewski2006diss}, we have
	\[\lim\limits_{k\to\infty} T(\xi_{n_k}) = T(\xi^*).\]
	But since the objectives $(T(\xi_n))_{n\in\mathbb{N}}$ converges to $T^*$ by assumption, we also have
	\[\lim\limits_{k\to\infty} T(\xi_{n_k}) = T^*.\]
	So by the uniqueness of limits, we have $T(\xi^*) = T^*$.
\end{proof}

We now show that the new algorithm finds arbitrary good approximations of the T-optimal design $\xi^*$ as long as the solution of the \textproc{DISC-MD} sub-routine returns a sufficiently good approximation for the optimal design on a finite subset $\X_N$.

But before we can prove the statement, we have to introduce a bound for the second directional derivative. Recall from Proposition \ref{prop:dirDerTcrit} that for a design $\xi$, and the directional derivative $\psi(x,\xi)$ in direction $\xi_x-\xi$, we have
\begin{align}
	\label{expr:K}
	&T\big((1-\alpha)\xi + \alpha\bar{\xi}, \, \hat{\para}((1-\alpha)\xi + \alpha\bar{\xi})\big) \nonumber \\
	&\qquad \qquad = T(\xi, \hat{\para}(\xi)) + \alpha \int_{\X}\psi(x,\xi;\hat{\para}(\xi)) ~\bar{\xi}(dx) + \BigO(\alpha^2) \\
	&\qquad \qquad \leq T(\xi, \hat{\para}(\xi)) + \alpha \int_{\X}\psi(x,\xi;\hat{\para}(\xi)) ~\bar{\xi}(dx) + \alpha^2 K \nonumber
\end{align}
for some $K\in\mathbb{R}$ which depends on the given models. This $K$ can be understood as a bound for the second directional derivative, but also plays a role on how close we can get to a T-optimal solution.

\begin{theorem}[Convergence of SIP-Based Algorithm for T-Criterion]
	\label{conv_SIP_new_alg}
	Let $\varepsilon>0$. Assume in each iteration $n$ that \textproc{DISC-MD} returns a regular $\varepsilon$-T-optimal design $\xi\high{n}$ on the finite design space $\X\high{n}\subset\X$. Then, a subsequence of intermediate solutions $(\xi\high{n_j})_{j\in\mathbb{N}}$ of Algorithm \ref{alg:SIPNewApp} converges to a  $2\sqrt{K\varepsilon}$-T-optimal design $\hat{\xi}$ on $\X$, i.e.
	\[\max_{x\in\X} \, \psi(x, \hat{\xi}) \leq 2\sqrt{K\varepsilon}\]
	with $\xi\high{n_j} \to \hat{\xi}$ as $j\to\infty$.
\end{theorem}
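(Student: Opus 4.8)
The plan is to combine a Vector-Direction-Method progress estimate with a compactness argument. Throughout, write $\hat{\para}_2\high{n} := \hat{\para}_2(\xi\high{n})$ and
$$M_n := \max_{x\in\X}\psi(x,\xi\high{n}) = \varphi\big(x\high{n+1},\hat{\para}_2\high{n}\big) - T(\xi\high{n}),$$
where the second equality holds because $x\high{n+1}$ is by construction \eqref{expr:newPoint} a maximizer of $\varphi(\cdot,\hat{\para}_2\high{n})$. Since $\int_\X\psi(x,\xi\high{n})\,\xi\high{n}(dx)=0$ and the maximum over points dominates this average, we have $M_n\geq 0$. The goal is to produce a weakly convergent subsequence $\xi\high{n_j}\to\hat\xi$ with $\max_{x\in\X}\psi(x,\hat\xi)\leq 2\sqrt{K\varepsilon}$.

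First I would derive a per-iteration lower bound on the progress. Moving from $\xi\high{n}$ towards the single-point design $\xi_{x\high{n+1}}$ and invoking the second-order estimate \eqref{expr:K}, whose constant $K$ also controls the remainder from below, gives for every $\alpha\in[0,1]$
$$T\big((1-\alpha)\xi\high{n}+\alpha\,\xi_{x\high{n+1}}\big)\;\geq\;T(\xi\high{n})+\alpha M_n-\alpha^2 K.$$
Choosing $\alpha=\min\{1,\,M_n/(2K)\}$ makes the right-hand side at least $T(\xi\high{n})+M_n^2/(4K)$ in the relevant regime $M_n\le 2K$. The crucial observation is that the mixed design on the left is supported on $\X\high{n}\cup\{x\high{n+1}\}=\X\high{n+1}$, hence its value is bounded by the T-optimal value $T^*_{n+1}$ on $\X\high{n+1}$. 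Since $\xi\high{n+1}$ is assumed $\varepsilon$-T-optimal on $\X\high{n+1}$, the corollary preceding Lemma \ref{lem:existsOfSol}, applied on the finite design space $\X\high{n+1}$, yields $T(\xi\high{n+1})\geq T^*_{n+1}-\varepsilon$. Chaining these inequalities gives the key recursion
$$T(\xi\high{n+1})-T(\xi\high{n})\;\geq\;\frac{M_n^2}{4K}-\varepsilon.$$

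Next I would exploit that $T$ is bounded above on $\Xi(\X)$, since $\varphi$ is continuous on the compact set $\X\times\Para_2$. If $\liminf_n M_n>2\sqrt{K\varepsilon}$, then $M_n^2/(4K)-\varepsilon\geq\delta>0$ for all large $n$, and telescoping the recursion forces $T(\xi\high{n})\to\infty$, a contradiction. Hence $\liminf_n M_n\leq 2\sqrt{K\varepsilon}$, and I may select a subsequence with $M_{n_j}\to L:=\liminf_n M_n\leq 2\sqrt{K\varepsilon}$.

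Finally I would pass to the limit. By Prokhorov's theorem, exactly as in the proof of Lemma \ref{lem:existsOfSol}, a further subsequence of $(\xi\high{n_j})$ converges weakly to some $\hat\xi\in\Xi(\X)$, and by compactness of $\Para_2$ the minimizers satisfy $\hat{\para}_2\high{n_j}\to\hat\para^*$ along a subsequence. Passing to the limit in $T(\xi\high{n_j},\hat{\para}_2\high{n_j})\leq T(\xi\high{n_j},\para_2)$, using that $\varphi(\cdot,\hat{\para}_2\high{n_j})\to\varphi(\cdot,\hat\para^*)$ uniformly (uniform continuity of $\varphi$ on the compact $\X\times\Para_2$) together with weak convergence, shows $\hat\para^*$ minimizes $T(\hat\xi,\cdot)$; in particular $T(\xi\high{n_j})\to T(\hat\xi)$ and $\max_{x}\varphi(x,\hat{\para}_2\high{n_j})\to\max_{x}\varphi(x,\hat\para^*)$. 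Consequently
$$\max_{x\in\X}\psi(x,\hat\xi)=\max_{x\in\X}\varphi(x,\hat\para^*)-T(\hat\xi)=\lim_{j}M_{n_j}=L\leq 2\sqrt{K\varepsilon}.$$
I expect the main obstacle to be precisely this last interchange of the maximum over $x$ with the weak limit of the designs, which rests on the (upper semi-)continuity of the optimal-parameter map $\xi\mapsto\hat{\para}_2(\xi)$ at the accumulation point: if $\hat\xi$ is regular, then $\hat\para^*=\hat{\para}_2(\hat\xi)$ is forced and $\psi(\cdot,\hat\xi)$ is unambiguous; otherwise one must carry $\hat\para^*$ along as a selected optimal parameter. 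The progress recursion itself is comparatively routine once the lower-bound form of \eqref{expr:K} and the optimal step size are in place.
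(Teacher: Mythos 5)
Your proposal is correct and follows essentially the same route as the paper: the same second-order progress estimate with constant $K$ and an (approximately) optimal step length, the same chain $T(\xi\high{n+1})\geq T^*_{n+1}-\varepsilon\geq T(\xi^{\text{LS}}_{n+1})-\varepsilon$, the same telescoping to rule out $\liminf_n M_n>2\sqrt{K\varepsilon}$, and the same Prokhorov/weak-continuity passage to the limit. The only difference is organizational --- you derive the $\liminf$ bound first and then pass it to the weak limit, whereas the paper extracts the weak limit first and argues by contradiction --- and the continuity issue you flag at the end is exactly the point the paper also handles by citing weak continuity of $T$ and weak outer semicontinuity of $\hat{\para}_2(\cdot)$ together with regularity.
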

\begin{proof}
	Let $\hat{\xi}_N$ be the solution found by the \textproc{DISC-MD} sub-routine for the finite design space $\X_N = \{x_1,\ldots,x_N\}$ as input, and let $\xi^*_N$ be an actual T-optimal design on $\X_N$. Since the sub-routine stops with an $\varepsilon$-T-optimal solution on $\X_N$, we have
	\begin{equation}
		\label{expr:proofConHatStar}
		T(\xi^*_N) \geq T(\hat{\xi}_N) \geq T(\xi^*_N)-\varepsilon.
	\end{equation}
	
	We divide this proof into two parts. First, we show that the sequence of solutions $(\hat{\xi}_N)_{N\in\mathbb{N}}$ converges weakly to a design $\hat{\xi}$. Then, we prove that $\hat{\xi}$ is a $2\sqrt{K\varepsilon}$-T-optimal solution.
	
	%	\underline{1) The $\varepsilon$-T-optimal solution $\hat{\xi}_N$ converge weakly:}
	
	We start with the sequence of optimal objective values $(T(\xi^*_N))_{N\in\mathbb{N}}$. It is monotone increasing because we may only add points to the finite design space $\X_N$ in each iteration, thus increasing the feasible region $\Xi(\X_N)$ for the designs. Also, $(T(\xi^*_N))_{N\in\mathbb{N}}$ is bounded from above by $T(\xi^*)$, which is the objective value of the T-optimal design on the whole set $\X$. Therefore this sequence converges to an element $T^{**}\in\mathbb{R}$, i.e.\
	\begin{equation}
		\label{expr:proof:limitStar}
		\lim\limits_{N\to\infty} T(\xi_N^*) = T^{**}.
	\end{equation}
	%	Then, analogous to the proof for the existence of T-optimal solutions (Proposition \ref{prop:exist}), we conclude there exists an element $\xi^{**}$ such that 
	%	\[T(\xi^{**}) = T^{**},\]
	%	and that this element is the weak limit of a subsequence $(\xi^*_{N_j})_{j\in\mathbb{N}}$.  Altogether, due to the monotonicity of $\left(T(\xi^*_N)\right)_{N\in\mathbb{N}}$ and the weak continuity of the T-criterion, we have
	%	\[\lim\limits_{N\to\infty} T(\xi_N^*) = T(\xi^{**}).\]		
	Next, we show that the sequence of the $\varepsilon$-T-optimal solutions $(\hat{\xi}_{N})_{N\in\mathbb{N}}$ has a weakly convergent subsequence. Recall (\ref{expr:proofConHatStar}), which states
	\begin{equation}
		\label{expr:proof:hatInStar}
		T(\hat{\xi}_N) \in \left[T(\xi^*_N)-\varepsilon,\; T(\xi^*_N)\right].
	\end{equation}
	Combining (\ref{expr:proof:limitStar}) and (\ref{expr:proof:hatInStar}), we have that for a fixed $\delta>0$, there exists a sufficiently large $N_0\in\mathbb{N}$ such that for all $N>N_0$:
	\begin{align*}
		T^{**} - T(\hat{\xi}_N) &= T^{**} - T(\xi^*_N) + T(\xi^*_N) - T(\hat{\xi}_N)\\
		&\leq \delta + \varepsilon.
	\end{align*}
	As a consequence, the sequence $(T(\hat{\xi}_{N}))_{N\in\mathbb{N}}$ is bounded within a compact subset of $\mathbb{R}$. Specifically
	\[T(\hat{\xi}_N) \in \left[T^{**}-\delta-\varepsilon,\; T^{**}\right]\]
	for $N>N_0$. So there exists a convergent subsequence $(T(\hat{\xi}_{N_{j}}))_{j\in\mathbb{N}}$ with  
	\[\lim\limits_{j\to\infty} T(\hat{\xi}_{N_{j}}) = \hat{T}\]
	for some $\hat{T}\in\mathbb{R}$. And by the same arguments as above, we take another subsequence which weakly converges to a design $\hat{\xi}$ with \[\hat{T} = T(\hat{\xi}) \in \left[T^{**} - \varepsilon, T^{**}\right].\] For the sake of convenience we only denote the final subsequence by $(\hat{\xi}_s)_{s\in\mathbb{N}}$.
	
	%	\underline{2) The weak limit $\hat{\xi}$ is an $\varepsilon$-T-optimal solution:}
	
	We finally show that $\hat{\xi}$ is a $2\sqrt{K\varepsilon}$-T-optimal solution. Assume it is not, i.e.\
	\begin{equation}
		\label{expr:proof:assumptionHat}
		\max_{x\in\X} \psi(x, \hat{\xi}) = \max_{x\in\X} \, \varphi(x,\hat{\para}_2(\hat{\xi})) - T(\hat{\xi}) > 2\sqrt{K\varepsilon}.
	\end{equation}
	Due to continuity of the T-criterion w.r.t.\ weakly convergent designs, weak outer semi-continuity of the optimal parameters (see both in \cite{kuczewski2006diss}), and the obvious continuity of $\varphi$, we have that for any $\varepsilon'>0$, there is a sufficiently large $s_0\in\mathbb{N}$ such that for all $s\geq s_0$:
	\begin{equation}
		\label{expr:proof:modExpressionHat}
		\max_{x\in\X} \, \psi(x, \hat{\xi}_s) \geq 2\sqrt{K\varepsilon} + \varepsilon'.
	\end{equation}
	Next, define a design ${\xi}_{s+1}\high{\text{LS}}$ by
	\[\xi_{s+1}\high{\text{LS}} := (1-\hat{\alpha}_s)\hat{\xi}_s + \hat{\alpha}_s \, \xi_{\hat{x}_s},\]
	where
	\[\hat{x}_s = \argmax_{x\in\X} \, \psi(x, \hat{\xi}_s),\]
	and
	\[\hat{\alpha}_s = \argmax_{\alpha\in[0,1]} \; (1-\alpha)\hat{\xi}_s + \alpha \, \xi_{\hat{x}_s}.\]
	Basically, ${\xi}_{s+1}\high{\text{LS}}$ is the design which we get if we took $\hat{\xi}_s$ and applied optimal line search. In this case, $\hat{x}_s$ induces a direction of ascent. The defined design serves as a lower bound for our actual optimal design $\xi^*_{s+1}$ on the set $\X_{s+1} := \X_s \cup \{\hat{x}_s\}$, i.e.\
	\begin{equation}
		\label{expr:proof:optLargerLS}
		T(\xi^*_{s+1}) \geq T(\xi_{s+1}\high{\text{LS}}).
	\end{equation}	
	%	From here on, we follow the proof by Fedorov for the convergence of Fedorov's-Algorithm, see [Fedorov 2012], Theorem 3.3. 
	%	Given our assumption (\ref{expr:proof:assumptionHat}) that the directional derivative is sufficiently large in every iteration, we show that we also have a sufficiently large increase in the objective value. We turn this into a contradiction because the actual optimal objective value $T(\xi^*)$ is finite and cannot be exceeded.	
	Because $\xi_{s+1}\high{\text{LS}}$ is a convex combination of designs with an optimal step-length $\hat{\alpha}_s\in[0,1]$, we use (\ref{expr:K}), and our modified assumption (\ref{expr:proof:modExpressionHat}) in order to conclude
	\begin{align*}
		T(\xi\high{\text{LS}}_{s+1}) - T(\hat{\xi}_s) &= T(\hat{\xi}_s) + \hat{\alpha}_s  \psi(\hat{x}_s,\hat{\xi}_s) + \BigO(\hat{\alpha}_s^2) - T(\hat{\xi}_s)\\
		&= \hat{\alpha}_s \psi(\hat{x}_s,\hat{\xi}_s) + \BigO(\hat{\alpha}_s^2)\\
		&\geq \hat{\alpha}_s (\sqrt{2K\varepsilon} + \varepsilon') - \hat{\alpha}_s^2 K\\
		&\geq \frac{(2\sqrt{K\varepsilon} + \varepsilon')^2}{4K}\\
		&\geq \varepsilon + \frac{(\varepsilon')^2}{4K},
	\end{align*}
	where the second inequality from below follows for the sub-optimal choice of a step-length \[\alpha_s = \frac{2\sqrt{K\varepsilon} + \varepsilon'}{2K}.\]	
	Therefore, with (\ref{expr:proof:optLargerLS}), we get
	\begin{align*}
		T(\hat{\xi}_{s+1}) - T(\hat{\xi}_s) &\geq T(\xi^*_{s+1}) - \varepsilon - T(\hat{\xi}_s)\\
		&\geq T(\xi\high{\text{LS}}_{s+1}) - T(\hat{\xi}_s) - \varepsilon\\
		&\geq \frac{(\varepsilon')^2}{4K}.
	\end{align*}	
	But this yields
	\[T(\xi^*) - T(\hat{\xi}_0) \geq \sum_{s=0}^{n} T(\hat{\xi}_{s+1}) - T(\hat{\xi}_s) \geq \sum_{s=0}^{n} \frac{(\varepsilon')^2}{4K} \to \infty\]
	as $n\to\infty$, which is a contradiction to $T(\xi^*) < \infty$ and $T(\hat{\xi}_0) \geq 0$. So our assumption (\ref{expr:proof:assumptionHat}) is false and $\hat{\xi}$ must be a $2\sqrt{K\varepsilon}$-T-optimal solution.		
\end{proof}

	\section{Numerical Results}
\label{sec:num}

We tested our new algorithm on two examples from chemical process engineering. Additionally, we compared the performance against three other algorithms from the literature, namely the original Vector Direction Method used by Atkinson and Fedorov, the algorithm by Dette et al.\ \cite{dette2015bayesian}, and the original SIP formulation which solves the maximin problem (\ref{expr:originalSIP}) by the Blankenship \& Falk algorithm. All algorithms have been implemented in Python \cite{python}.

\subsection{Implementation Details}

In the following, we describe how we numerically solve the sub-problems that make up each algorithm.

\paragraph{Computing new parameters.}
We solve the least-squares problem (\ref{expr:WLSforDISC_MD}) with the help of either the \textit{scipy dogbox} solver, or the KNITRO solver, depending on the application. Also, in order to find a globally optimal solution, we do a multi-start where one starting point is the solution from the previous iteration, and all other starting points are determined by a Sobol sequence \cite{sobol1967distribution}.

However, we might run into cases where we have very few support points with positive weight. In a degenerate case it might happen that we have full weight on a single design point in an iteration, i.e.\ we get an intermediate solution with $\xi = \{(x_1,1),(x_2,0),\ldots,(x_N,0)\}$. This can happen due to numerical instabilities. In these cases, the least-squares problem does not have a unique optimal solution and may yield poor fits.
Thus, we use a regularization by adding a small weight to all currently selected design points $x_1,\ldots,x_N$. This yields the regularized least-squares problem
\begin{equation}
	\label{expr:num:LSreg}
	\min_{\para_2\in\Para_2} \: \sum_{i=1}^N \w_i \left[ \f_1(x_i) - \f_2(x_i\para_2)\right]^2 + \lambda \sum_{i=1}^N \left[ \f_1(x_i) - \f_2(x_i\para_2)\right]^2
\end{equation}
with some small \textit{least-squares regularizer} $\lambda>0$, e.g.\ $\lambda = 10^{-8}$. This problem often has a unique optimal solution, and it is still very close to the original problem. The idea to consider all design points $x_1,\ldots,x_N$ with an additional small weight can also be justified by the fact that the Chebyshev approximation problem is the dual of the T-criterion. So for the corresponding parameters of a T-optimal design, we require that the squared distances are low everywhere. 

\paragraph{Computing new design points.} The sub-problem of finding a new design point (\ref{expr:newPoint}) is in general non-convex and solved using BARON, or by enumeration if the design space was discrete.

\paragraph{Updating the design.} Computing a new design by the linear program (\ref{expr:upperLevelLP}) is done with the MOSEK solver via the pyomo interface. Additionally, for other algorithms, we solve the quadratic program in Dette's algorithm with the KNITRO solver, and use again BARON for the simultaneous optimization of the weights and design points in the original SIP approach.

\paragraph{Stopping rule.} We use a stopping criterion based on the Equivalence Theorem \ref{thm:equThm}. For regular designs, it simplifies to 
\begin{equation}
	\label{expr:stopCrit}
	\max_{x\in\X} \varphi(x,\hat{\para}_2(\xi)) - T(\xi, \hat{\para}_2(\xi)) \leq \varepsilon,
\end{equation}
where the $\varepsilon$ is a pre-determined tolerance. However, if the estimated parameters $\hat{\para}_2(\xi)$ are not exact, it might happen that this criterion is satisfied too early. Specifically, poorly estimated parameters might cause that the support points of the design $\xi$ are not all equal, which is also required by the Equivalence Theorem \ref{thm:equThm}. But then the maximal support point might still satisfy (\ref{expr:stopCrit}), whereas the others do not. Therefore, we propose a small adaption:
\begin{equation}
	\label{expr:stopCritAdapt}
	\min_{x\in\supp(\xi)} \varphi(x,\hat{\para}_2(\xi)) - T(\xi, \hat{\para}_2(\xi)) \leq \varepsilon.
\end{equation}
If (\ref{expr:stopCritAdapt}) is satisfied, so is (\ref{expr:stopCrit}), but it does recognize the aforementioned case.

\subsection{Examples}

In the following, we compare the performance of the Vector Direction Method (VDM), Dette's algorithm, the original SIP approach, and the new \textproc{2-ADAPT-MD} algorithm. We consider two examples: One simple example that should be easy to solve for all models, and a harder example based on a small system of ordinary differential equations. 

In Table \ref{tab:num:params}, we present the parameters of the algorithms. They can be tweaked to find a better solution or terminate earlier. If not otherwise stated, we take the default values. However, for the VDM, we make two distinctions:
\begin{itemize}
	\item The least-squares regularizer $\lambda$ is not required and thus set to zero since all support points keep a positive weight.
	\item The maximum number of iterations $n_\text{maxiter}$ is increased to 1000.
\end{itemize}

The following results were computed on an Intel Xeon Gold 6248R processor with four available cores. 

\begin{table}[th]
	\renewcommand{\arraystretch}{1.5}
	\setlength{\tabcolsep}{10pt}
	\begin{tabularx}{\textwidth}{lXl} \toprule 
		Parameter & Description & Default \\ \midrule 
		$\varepsilon$ & tolerance of stopping criterion & $10^{-5}$ \\ 
		$n_\text{maxiter}$  & maximum number of iterations & $100$ \\ 
		$n_\para$ & number of starting solutions to solve the least-squares problem (\ref{expr:WLSforDISC_MD}) or (\ref{expr:num:LSreg}), excluding the optimal parameters from the previous iteration & $9$ \\ 
		$\lambda$ & regularizer for least-squares problem (\ref{expr:num:LSreg}) (not needed for Fedorov's algorithm) & $10^{-8}$ \\ 
		$\varepsilon_\text{SIP}$ & tolerance of stopping criterion used in \textproc{DISC-MD} & $10^{-5}$ \\ 
		$n_\text{maxiter.SIP}$  & maximum number of iterations of \textproc{DISC-MD} & $20$ \\ 
		\bottomrule
	\end{tabularx} 
	\caption{Table of customizable parameters.}
	\label{tab:num:params}
\end{table}

\subsubsection*{Michaelis-Menten Model}

In our first example we consider the \textit{Michaelis-Menten (MM)} model. It gives a formula for the rate of a reaction where a substrate reacts with an enzyme to form a product:
\begin{equation*}
	f_2(x,(V,K)) = \frac{Vx}{K+x}.
\end{equation*}
Here, $x$ is a substrate concentration, $V$ is the maximum reaction rate, and $K$ is the so-called Michaelis-Menten constant which is the substrate concentration at which the reaction rate is at half maximum $\frac{1}{2} V$ \cite{bharath2021MM}. Alternatively, we might add a linear term to this rate. This yields the \textit{modified Michaelis-Menten (ModMM)} model:
\begin{equation*}
	f_1(x,(V,K,F)) = \frac{Vx}{K+x}+Fx.
\end{equation*}
Our goal is to find experiments such that we can discriminate between the two models and decide, which one fits our real-world problem better. This example as it stands was also studied in \cite{lopez2007optimal} and has since then become a standard example for model discrimination.

Considering that the original model is included in the modified version, we assume that $f_1$ is our true function with the parameters $V=1$, $K=1$, and $F=0.1$. Accordingly, $f_2$ is the alternative model. The design space is chosen to be $\X = [0.001, 5]$, and for the parameter space we have $(V,K) \in \Para_2 = [10^{-3}, 5]\times[10^{-3}, 5].$ Lastly, as the initial design we select
\[\xi\high{0} = \left\lbrace \begin{array}{cccc}
	1 & 2 & 3 & 4 \\ 
	\nicefrac{1}{4} & \nicefrac{1}{4} & \nicefrac{1}{4} & \nicefrac{1}{4}
\end{array}  \right\rbrace.\]

The performance of the algorithms for this example is presented in Table \ref{tab:num:MMvsMMM}. Additionally, the computed designs for each algorithm are shown in Table \ref{tab:num:MMvsMMM:optDes}.

\begin{table}[htp]
	\renewcommand{\arraystretch}{1.2}
	\setlength{\tabcolsep}{15pt}
	\begin{tabularx}{\textwidth}{lrrr} \toprule 
		Algorithm & \multicolumn{1}{c}{Reached Accuracy} & \multicolumn{1}{c}{$T(\xi^*)$}  & Run Time \\ \midrule 
		VDM & $9.64\cdot 10^{-6}$ & $1.1805\cdot 10^{-3}$ & 41.31 s \\
		Dette et al. & $5.02\cdot 10^{-6}$ & $1.1853\cdot 10^{-3}$ & 1.40 s \\
		original SIP & $4.75\cdot 10^{-6}$ & $1.1820\cdot 10^{-3}$ & 16511.88 s \\
		\textproc{2-ADAPT-MD} & $3.46\cdot 10^{-6}$ & $1.1854\cdot 10^{-3}$ & 6.91 s \\
		\bottomrule
	\end{tabularx} 
	\caption{Results for Michaelis-Menten vs.\ modified Michaelis-Menten.}
	\label{tab:num:MMvsMMM}
\end{table}

\begin{table}[htp]
	\renewcommand{\arraystretch}{1.5}
	\setlength{\tabcolsep}{10pt}
	\begin{tabularx}{\textwidth}{ll} \toprule
		Algorithm & Optimal Design $\xi^*$ \\ \midrule
		\vspace{0.3cm}
		VDM & $\left\lbrace \begin{array}{ccccccc}
			5 & 0.410 & 0.387 & 2.605 & 0.484 & 2.580 & \ldots \\
			0.2236 & 0.0062 & 0.0062 & 0.0062 & 0.0062 & 0.0062 & \ldots
		\end{array}  \right\rbrace$ \\ 
		\vspace{0.3cm}
		Dette & $\left\lbrace \begin{array}{ccc}
			0.387 & 2.581 & 5 \\ 
			0.3917 & 0.3897 & 0.2186\end{array} \right\rbrace$ \\
		\vspace{0.3cm}
		original SIP & $\left\lbrace \begin{array}{ccc}
			0.385 & 2.595 & 5 \\ 
			0.3904 & 0.3902 & 0.2194\end{array} \right\rbrace$ \\
		\textproc{2-ADAPT-MD} & $\left\lbrace \begin{array}{ccc}
			0.386 & 2.596 & 5 \\ 
			0.3906 & 0.3896 & 0.2198 \end{array} \right\rbrace$ \\
		\bottomrule
	\end{tabularx}
	\caption{Optimal designs per algorithm for Michaelis-Menten vs.\ modified Michaelis-Menten Model.}
	\label{tab:num:MMvsMMM:optDes}
\end{table}

Moreover, we illustrate the results. In Figure \ref{fig:num:MMvsMMM:fit}, we show how well the Michaelis-Menten model $f_2$ is fitted to the modified Michaelis-Menten model. Since all algorithms find designs with approximately the same optimal parameters
\begin{equation*}
	\hat{\para}_2 = (\hat{V}, \hat{K}) \approx (1.86, \, 2.15),
\end{equation*}
this figure is representative for all algorithms. Additionally in Figure \ref{fig:num:MMvsMMM:sensi}, we present a plot of the directional derivative $x\mapsto \psi(x,\xi^*)$ for the optimal design computed by the \textproc{2-ADAPT-MD} algorithm. It illustrates that the necessary and sufficient optimality criterion based on the Equivalence Theorem \ref{thm:equThm} is satisfied. We also plotted the directional derivative of the solution from the Vector Direction Method, which shows its problems to get rid of bad support points.

\begin{figure}[htp]
	\centering
	\includegraphics[width=.5\linewidth]{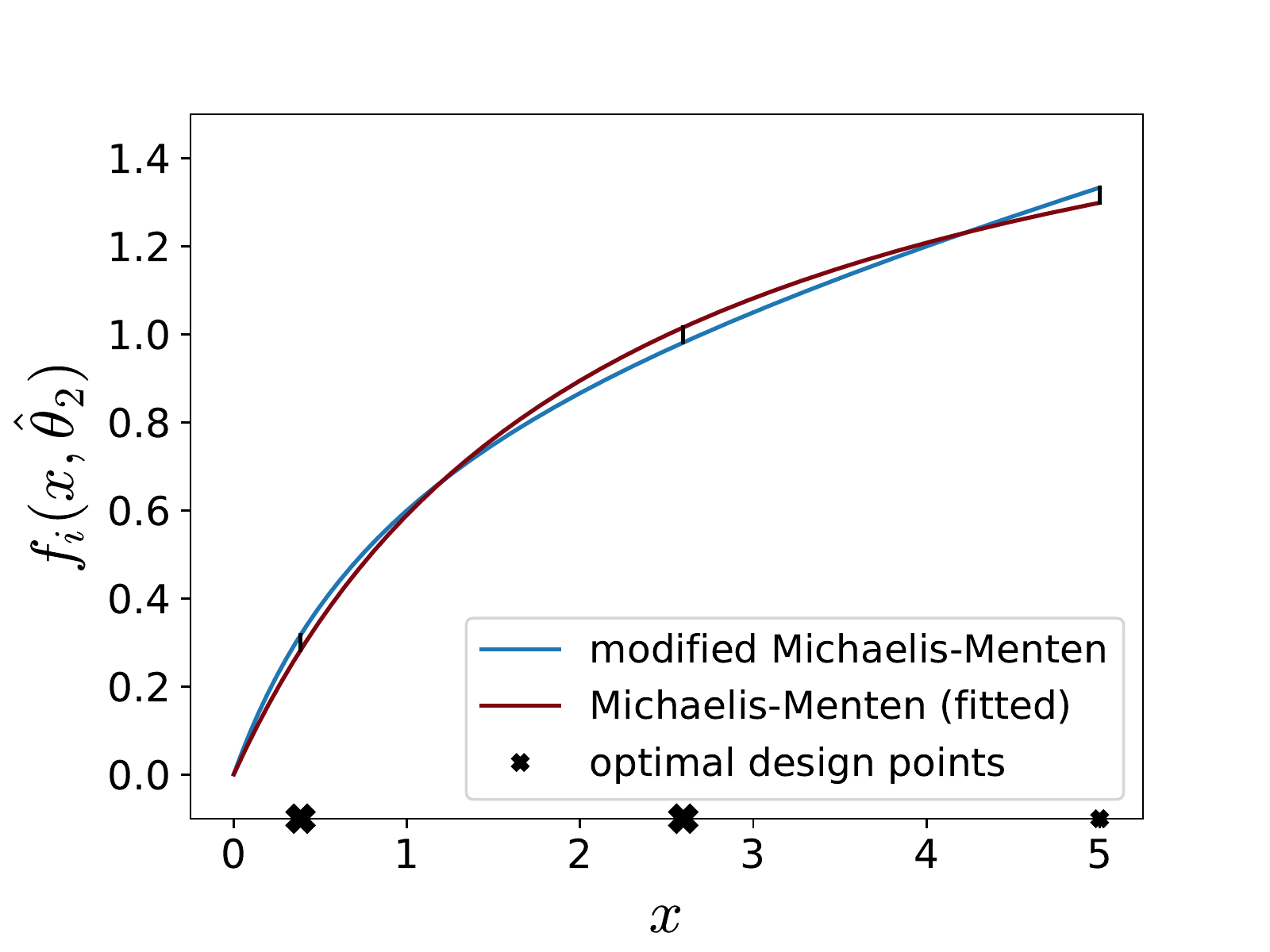}
	%		\captionsetup{width=.8\linewidth}
	\caption{ModMM reference model vs.\ fitted MM alternative model with support points of optimal design from \textproc{2-ADAPT-MD} algorithm. The size of the crosses indicate the weights of the design, and the black lines highlight the model differences.}
	\label{fig:num:MMvsMMM:fit}
\end{figure}

\begin{figure}[htp]
	\centering
	\begin{subfigure}[t]{.44\textwidth}
		\centering
		\includegraphics[width=\linewidth]{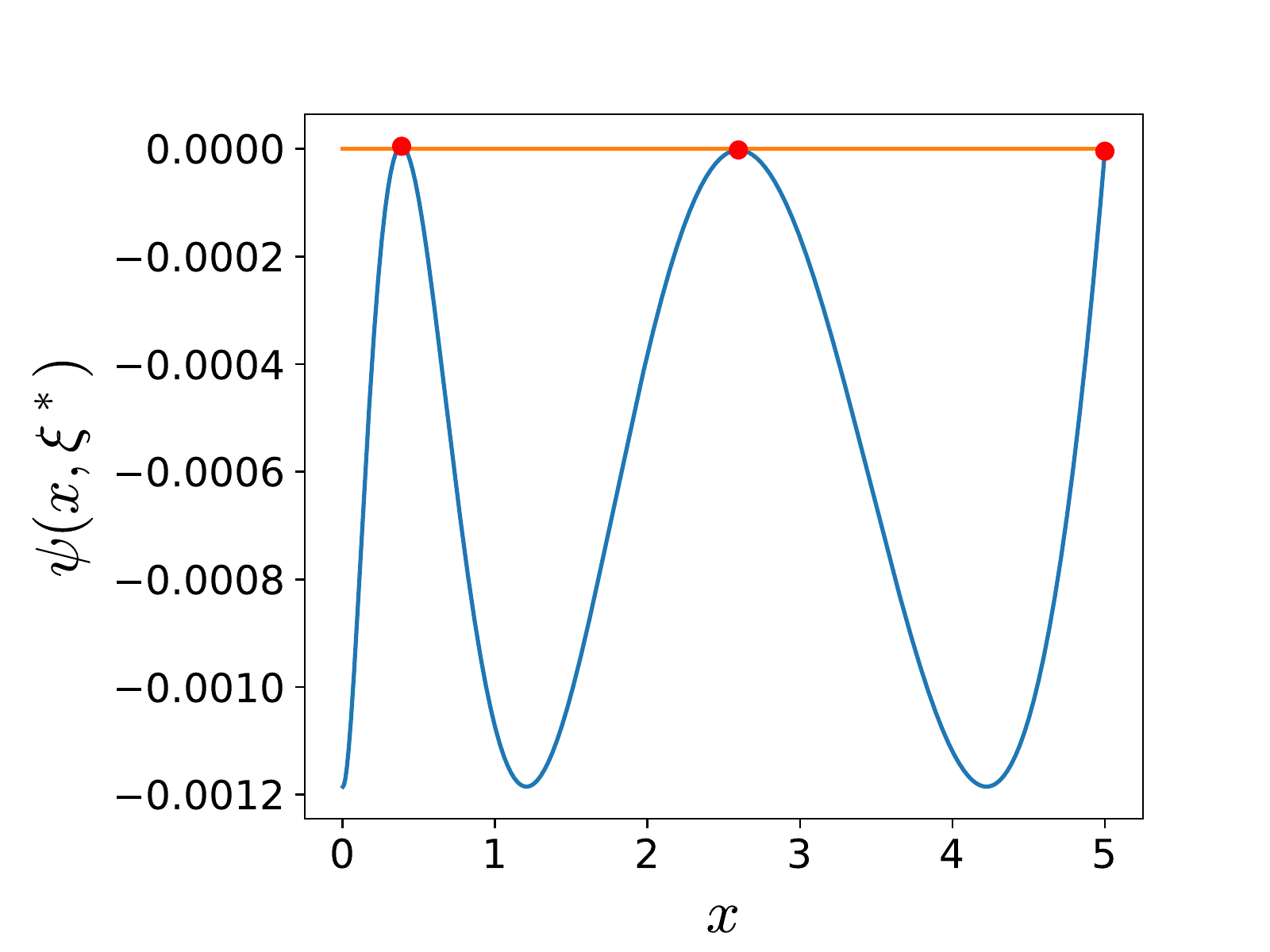}
		\caption{Computed by \textproc{2-ADAPT-MD}.}
	\end{subfigure}\hspace{.1\textwidth}
	\begin{subfigure}[t]{.44\textwidth}
		\centering
		\includegraphics[width=\linewidth]{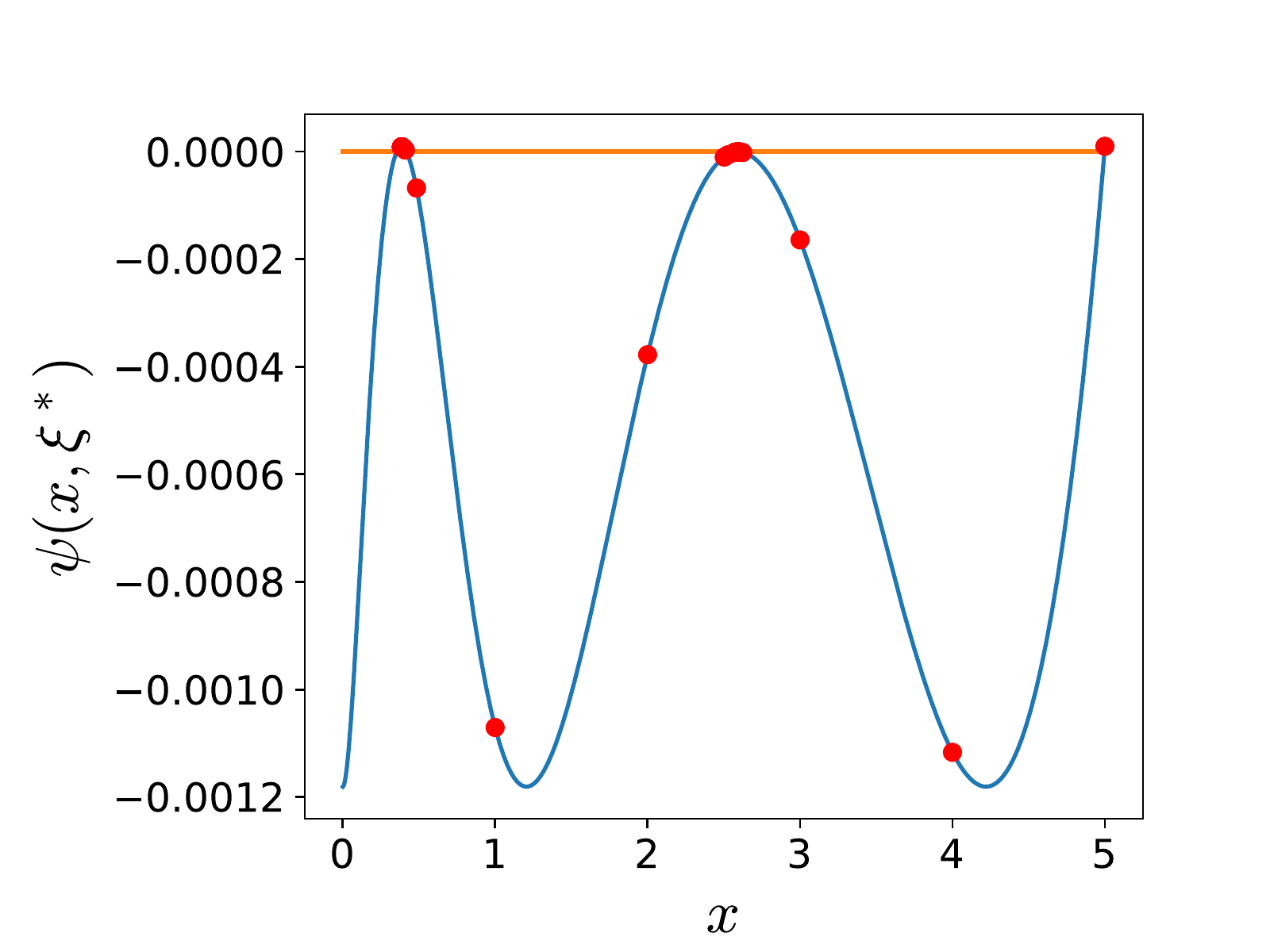}
		\caption{Copmuted by VDM.}
	\end{subfigure}
	\caption{Directional derivative (blue) with support points (red).}
	\label{fig:num:MMvsMMM:sensi}
\end{figure}

Overall, we see that all methods are capable of computing T-optimal designs. However, the algorithm by Dette et al.\ is the fastest in this easy example, closely followed by the \textproc{2-ADAPT-MD} algorithm. The Vector Direction Method already takes significantly longer, whereas the original SIP approach has tremendous problems solving the upper-level problem to global optimality, thus taking by far the longest.

\subsubsection*{Partially Reversible vs.\ Irreversible Consecutive Reaction}

Sometimes it is desirable to discriminate between an irreversible consecutive reaction of the form
\begin{equation}
	\label{expr:num:reacIrr}
	\ce{A ->[k_1] B ->[k_2] C},
\end{equation}
and a consecutive reaction where the first part is reversible, i.e.\
\begin{equation}
	\label{expr:reacRev}
	\ce{A <=>[k_1][k_3] B ->[k_2] C}.
\end{equation}
This problem was investigated in \cite{ucinski2005t}. Typically, reactions of such forms are modelled via a system of ordinary differential equations. For example, by modelling (\ref{expr:reacRev}) with power law expressions for the reaction rates, we get
\begingroup
\addtolength{\jot}{0.5em}
\begin{align}
	\frac{d[A]}{dt} &= -k_1 \, [A]^{n_1} + k_3 \, [B]^{n_3} \nonumber\\
	\frac{d[B]}{dt} &= k_1 [A]^{n_1} - k_2 [B]^{n_2} - k_3 [B]^{n_3} \\
	\frac{d[C]}{dt} &= k_2 [B]^{n_2}, \nonumber
\end{align}
\endgroup
where $[A]$, $[B]$, and $[C]$ are the concentrations of each component at a time $t$. Furthermore, $k_1$, $k_2$, and $k_3$ are rate constants, and $n_1$, $n_2$, and $n_3$ are reaction orders. Note that the model for the irreversible reaction (\ref{expr:num:reacIrr}) is included in this model by choosing $k_3 = 0$. And notice further that in this case, we deal with multi-response models that output the concentrations for all components $[A]$, $[B]$, and $[C]$ at time points $t$.

Now, if we want to discriminate between the models, we have to assume that the reaction with the reversible part (\ref{expr:reacRev}) is the reference model, and the irreversible reaction (\ref{expr:num:reacIrr}) is our alternative model. Otherwise, we could perfectly fit the alternative to the reference model, which means that the squared distance is zero everywhere and every design would be trivially optimal.

For the specific problem setting, we define the reference model by the parameter values $k_1 = 0.7$, $k_2=0.2$, $k_3=0.1$, $n_1=2$, $n_2=2$, and $n_3=1$. The parameter space for the alternative is given by $(k_1, k_2, n_1, n_2) \in \Para_2 := [0.5, 1.0]\times [0.05, 0.5]\times [1.5, 3.5]\times [1.5, 3]$.

As design variables, we consider the initial concentrations $[A]_0$, $[B]_0$, and $[C]_0$, and the time point of measurement $t$. However since evaluating the models at one point is already hard enough, we use a small discretized design space and maximize the squared distance (\ref{expr:newPoint}) by evaluating over the complete lattice instead of optimizing over the continuous space.
So we assume
$\left([A]_0, [B]_0, [C]_0, t\right) \in \X = \{0.5,0.7,0.9\}\times\{0.1,0.2,0.3\}\times\{0.0,0.15,0.3\}\times\{2,4,\ldots,10\}$.
The initial design is
\[\xi\high{0} = \left\{ \begin{array}{ccccc}
	\left(\begin{array}{c} 0.5\\0.1\\0\\2 \end{array}\right) & \left(\begin{array}{c} 0.5\\0.1\\0.15\\4 \end{array}\right) & \left(\begin{array}{c} 0.7\\0.3\\0.15\\6 \end{array}\right) & \left(\begin{array}{c} 0.9\\0.2\\0.15\\8 \end{array}\right)  & \left(\begin{array}{c} 0.9\\0.3\\0.3\\10 \end{array}\right) \vspace{0.5em} \\ 
	\nicefrac{1}{5} & \nicefrac{1}{5} & \nicefrac{1}{5} & \nicefrac{1}{5} & \nicefrac{1}{5}
\end{array}   \right\}.\] 

Due to this problem definition, the original SIP approach and the new \textproc{2-ADAPT-MD} algorithm both try to solve
\begin{equation*}
	\max_{\substack{{\w\geq0}\\ \Vert\w\Vert_1=1}} \min_{\para_2\in\Para_2} \; \sum_{i=1}^N \w_i \left[\f_1(x_i) - \f_2(x_i,\para_2)\right]^2.
\end{equation*}
But whereas the original SIP approach uses the Blankenship \& Falk algorithm directly, we iteratively determine a smaller subset of design points and apply the Blankenship \& Falk algorithm to this subset. This reduces the number of evaluations required to formulate the linear program (\ref{expr:upperLevelLP}) in the upper-level but should lead to more iterations overall.

The performance of the algorithms is presented in Table \ref{tab:num:backflow}. Though, for the original SIP method, we had to set the least-squares regularizer to $\lambda=0$, which worked in this example, but might lead to problems with other instances. Additionally, see Table \ref{tab:num:backflow:optDes} for the computed designs. Notice that this example discriminates between two multi-response models, and Dette's algorithm can only deal with single-response models, so its row is blank.

\begin{table}[htp]
	\renewcommand{\arraystretch}{1.2}
	\setlength{\tabcolsep}{15pt}
	\begin{tabularx}{\textwidth}{lrrr} \toprule 
		Algorithm & \multicolumn{1}{c}{Reached Accuracy} & \multicolumn{1}{c}{$T(\xi^*)$}  & Run Time \\ \midrule 
		VDM & $9.90\cdot 10^{-6}$ & $1.9275\cdot 10^{-3}$ & 2818.71 s \\
		Dette et al. & \multicolumn{1}{c}{-} & \multicolumn{1}{c}{-} & \multicolumn{1}{c}{-} \\
		original SIP & $4.04\cdot 10^{-6}$ & $1.9322\cdot 10^{-3}$ & 12042.51 s\\
		\textproc{2-ADAPT-MD} & $1.31\cdot 10^{-6}$ & $1.9322\cdot 10^{-3}$ & 345.30 s \\
		\bottomrule
	\end{tabularx} 
	\caption{Results for partially reversible vs.\ irreversible consecutive reaction model.}
	\label{tab:num:backflow}
\end{table}

\begin{table}[thp]
	\renewcommand{\arraystretch}{1.5}
	\setlength{\tabcolsep}{10pt}
	\begin{tabularx}{\textwidth}{ll} \toprule
		Algorithm & Optimal Design $\xi^*$ \\ \midrule
		\vspace{0.4cm}
		VDM & $\left\{ \begin{array}{cccc} (0.5,\, 0.1,\, 0 ,\, 2) & (0.9,\,  0.3, \, 0.3,\, 10) & (0.9, \, 0.1, \, 0.15 ,\, 10) & \ldots \\ 
			0.5563 & 0.3714 & 0.0336 & \ldots \end{array}   \right\}$ \\
		\vspace{0.4cm}
		original SIP & $\left\{ \begin{array}{cccc} (0.5,\, 0.1,\, 0 ,\, 2) & (0.9,\,  0.3, \, 0.3,\, 10) & (0.5, \, 0.1, \, 0 ,\, 10) \\ 
			0.5558 & 0.4120 & 0.0322 \end{array}   \right\}$ \\
		\vspace{0.4cm}
		\textproc{2-ADAPT-MD} & $\left\{ \begin{array}{ccc} (0.5,\, 0.1,\, 0 ,\, 2) & (0.9, \, 0.3, \, 0.3,\, 10) & (0.5, \, 0.1, \, 0 ,\, 10)  \\ 
			0.5562 & 0.4116 & 0.0322 \end{array}   \right\}$ \\
		\bottomrule
	\end{tabularx}
	\caption{Optimal designs per algorithm for partially reversible vs.\ irreversible consecutive reaction model.}
	\label{tab:num:backflow:optDes}
\end{table}

The results prove that we can not only find optimal designs even for multi-response models, but taking a small subset of candidate design points as it is done in the \textproc{2-ADAPT-MD} algorithm is indeed beneficial compared to the usual SIP approach. The additional effort to formulate the LP exceeds the cost of additional iterations. Also, the least-squares regularization can be applied more effectively, which can be crucial for instances where intermediate solutions with too few support points appear.

	\section{Conclusion}
\label{sec:conc}

In this paper we have introduced a novel algorithm to compute T-optimal experimental designs, the 2-ADAPT-MD. The algorithm utilizes a two-fold adaptive discretization of the parameters space $\Para_2$ as well as the design space $\X$.
Both discretizations are chosen similar to approaches from semi-infinite programming where an inner optimization problem is solved to select a new design point $x$, or a new parameter value $\para_2$, respectively.

We have proven the convergence of the 2-ADAPT-MD. Additionally we have shown bounds on the error in objective value, depending on the tolerance obtained in the discretized model discrimination sub-problem. The main advantage of the 2-ADAPT-MD is that it converges under only mild assumptions on the considered models. By using an increasing discretization in design points $x$ and parameters $\para_2$, we obtain linear programs and least-square programs as most frequent sub-problems. These structures are well studied and specialized solvers can be used to efficiently compute corresponding solutions. The use of discretizations also decreases our problem dimensions, which leads to a higher numerical stability. In particular, this is noticeable when adding a regularization to the least-square term.

In two examples from chemical engineering we have seen that the 2-ADAPT-MD can compete and even outperform state-of-the-art model discrimination methods. In comparison to a classical semi-infinite approach, we can drastically improve the time required for the computations. By using a second inner discretization in the parameters $\para_2$ we decrease the dimension in the regularization term of the least-squares problem.
Compared to the approach by Dette et al., we do not require linear approximations in the parameters for the selection of new candidate experiments. This makes our new algorithm more reliable and stable. Additionally, the 2-ADAPT-MD can also handle multi-response models, whereas the algorithm by Dette et al. is limited to the single-response case.

However, some aspects of the 2-ADAPT-MD need further investigation and can be improved upon. First, we need to solve a non-convex sub problem in each iteration to global optimality in order to refine the discretization in the design space $\X$. This is similar to most state-of-the-art algorithms, which rely on global optimization solvers or discrete design spaces $\X$.
Secondly, throughout this paper we have assumed the worst-case parameters $\hat{\para}_2$ to be unique. In future work we want to investigate if the 2-ADAPT-MD can be adjusted to also handle problems where the parameters are not unique.
	
	\bibliographystyle{abbrv}
	\bibliography{lit_MA}
	
	\newpage
	\begin{appendices}
		\section{Semi-Infinite Programming}
\label{App:SIP}

We briefly introduce some basics of semi-infinite programming. For more detailed overviews concerning this topic we refer to \cite{lopez2007semi}, \cite{djelassi2021recent}, and \cite{shapiro2009semi}.

%\subsubsection*{Definition of Semi-Infinite Programming}

\begin{defi}[Semi-Infinite Program]
	A \textit{semi-infinite program} is an optimization problem of the form
	\begin{align*}
		\text{SIP}:~ &\max_{x\in\mathbb{R}^d} ~ f(x)\\
		&\text{ s.t.} ~g(x,y)\geq 0, ~\forall y\in Y,
	\end{align*}
	where $Y$ is an (infinite) set, called the \textit{index set}, and the corresponding variables $y\in Y$ are called \textit{index variables}.
\end{defi}

%To guarantee the solvability of the semi-infinite program, we make the following assumptions on the function, the index set, and the constraints.
%
%\begin{assum}
%	We assume that
%	\begin{enumerate}
%		\item the objective $f:\mathbb{R}^{d} \to \mathbb{R}$ is continuously differentiable,
%		\item the index set $Y\subseteq\mathbb{R}^m$ is compact,
%		\item the constraints $g$, and its derivative with respect to the variables $\frac{\partial}{\partial x}\, g$ are continuously differentiable on $\mathbb{R}^d \times Y$.
%	\end{enumerate}
%\end{assum}

Generally speaking, a semi-infinite program is an optimization problem with infinitely many constraints which are induced by the index set $Y$. Similar to other types of optimization problems, one can consider special cases such as linear semi-infinite programs (LSIP).

\begin{defi}[Linear Semi-Infinite Program]
	A \textit{linear semi-infinite program} is a semi-infinite program of the form
	\begin{align*}
		\text{LSIP}:~&\max_{x\in\mathbb{R}^d} ~ c^T x\\
		&\text{ s.t.} ~ a(y)^T x\geq b(y), ~\forall y\in Y,
	\end{align*}
	where $a$ and $b$ can be arbitrary functions. In particular, they can be non-linear.
\end{defi}

Important examples of semi-infinite programs are maximin problems.

\begin{examplex}[Epigraph Formulation for Maximin Problems]
	\label{bsp:epi}
	A \textit{maximin problem}
	\[\max_{x\in X} \min_{y\in Y} \, f(x,y)\]
	can be formulated as a semi-infinite program in the following form:
	\begin{align*}
		\max_{\substack{x\in X\\t\in\mathbb{R}}} ~&t\\
		\text{s.t. } &f(x,y)\geq t, ~\forall y\in Y,
	\end{align*}
	which is also referred to as the \textit{epigraph form} of a maximin problem.
\end{examplex}

In order to find solutions for semi-infinite programs, several strategies have been introduced. Though the most prominent algorithm is the \textit{Blankenship \& Falk algorithm} \cite{blankenship1976infinitely}. We start its description by introducing the lower-level problem. For a given variable  $\bar{x}$, this sub-problem determines the index variable (or constraint) which is violated the most.

\begin{nota}[Lower-Level Problem]
	For a point $\bar{x}$, the \textit{lower-level problem} is given by
	\begin{align*}
		Q(\bar{x}):~ &\min_y ~ g(\bar{x}, y)\\
		&\text{ s.t. } ~ y\in Y.
	\end{align*}
\end{nota}
Notice that $\bar{x}$ is feasible, i.e.\ $g(x,y)\geq 0$ for all $y\in Y$, if the optimal solution $\bar{y}$ of $Q(\bar{x})$ satisfies $g(\bar{x}, \bar{y})\geq 0$.
%So instead of checking infinitely many constraints, we only have to check one, but need to solve an optimization problem instead.\\

However, it is in general still hard to derive a candidate for an optimal solution of a problem with infinitely many constraints. Therefore, we consider only a finite subset $\dot{Y}\subset Y$ of the index set which yields the \textit{discretized problem} or \textit{upper-level problem}.
\begin{nota}[Upper-Level Problem]
	For a given discretization $\dot{Y}\subset Y$ of the index set, the upper-level problem is given by
	\begin{align*}
		\text{SIP}(\dot{Y}):~ &\max_x ~ f(x)\\
		&\text{ s.t. } ~ g(x,y)\geq 0, ~\forall y\in \dot{Y}.
	\end{align*}
\end{nota}
Note that this discretized problem has finitely many constraints and can thus be solved by common strategies, e.g.\ non-linear programming.

%Now, the Blankenship \& Falk algorithm looks as below
Finally, we present the Blankenship \& Falk algorithm in Algorithm \ref{alg:BF}.

\begin{algorithm}[ht]
	\caption{Blankenship \& Falk Algorithm}\label{alg:BF}
	\begin{algorithmic}[1]\onehalfspacing
		\Require{$k=0$; initial discretization $Y\high{0}\subset Y$}
		\Repeat
		\State solve $\text{SIP}(Y\high{k})$ and obtain new solution $x\high{k}$ \Comment{upper-level}
		\State solve $Q(x\high{k})$ and obtain new solution $y\high{k}$ \Comment{lower-level}
		\If{$g(x\high{k}, y\high{k})\geq 0$} \Comment{check feasibility}
		\State \Return $x\high{k}$
		\EndIf
		\State $Y\high{k+1} := Y\high{k} \cup \{y\high{k}\}$ \Comment{augment discretization}
		\State $k := k+1$
		\Until{some stopping criterion is met}
	\end{algorithmic}
\end{algorithm}

One has to be careful with the computed solutions $x\high{k}$, though. They are not necessarily feasible because the upper-level problem in every iteration is a relaxation due to having fewer constraints. 
Nevertheless, convergence can be shown under some additional assumptions. %(see, e.g., \cite{lopez2007semi}). 
For example the following theorem is stated in \cite{lopez2007semi}.

\begin{theorem}[Convergence of the Blankenship \& Falk Algorithm]
	\label{thm:convBF}
	Suppose that the starting feasible region $\mathcal{F}(Y\high{0}) := \{x \mid g(x,y) \geq 0, ~\forall y\in Y\high{0}\}$ is compact. Then, the Blankenship \& Falk algorithm either stops with an optimal solution of the given semi-infinite program, or any accumulation point of the sequence of solutions $(x\high{k})_{k\in\mathbb{N}}$ is an optimal solution.
\end{theorem}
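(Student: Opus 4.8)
The plan is to exploit the monotone structure of the algorithm together with a compactness argument, treating the two alternatives of the statement separately. Throughout I rely on the standing assumptions of the SIP framework that make the lower-level problem $Q(\bar x)$ well posed, namely that $Y$ is compact and that $f$ and $g$ are continuous; these guarantee that each lower-level minimizer $y\high{k}$ exists and that the value function $v(x) := \min_{y\in Y} g(x,y)$ is continuous (Berge's maximum theorem). First I would record the basic monotonicity: since $Y\high{0}\subseteq Y\high{1}\subseteq\cdots$, the feasible regions are nested downward, $\mathcal{F}(Y\high{0})\supseteq\mathcal{F}(Y\high{1})\supseteq\cdots$, so the optimal values of the upper-level relaxations are non-increasing and, because each relaxation has a larger feasible set than the full SIP, every relaxation value is an upper bound for the true optimal value $f^*$. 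In particular, each iterate $x\high{k}$ lies in the compact set $\mathcal{F}(Y\high{0})$, so the sequence $(x\high{k})_{k\in\mathbb{N}}$ has accumulation points.

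For the termination branch, I would argue as follows: suppose the feasibility test succeeds at step $k$, i.e.\ $g(x\high{k}, y\high{k})\geq 0$. Since $y\high{k}$ minimizes the lower-level problem, this means $v(x\high{k}) = \min_{y\in Y} g(x\high{k}, y)\geq 0$, so $x\high{k}$ is feasible for the full SIP. As $x\high{k}$ is optimal for the relaxation $\text{SIP}(Y\high{k})$ we have $f(x\high{k})\geq f^*$, while feasibility gives $f(x\high{k})\leq f^*$; hence $x\high{k}$ is globally optimal.

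For the non-terminating branch, let $\bar x$ be an accumulation point and pass to a subsequence $x\high{k_j}\to\bar x$; by compactness of $Y$ I would pass to a further subsequence so that also $y\high{k_j}\to\bar y$. The key feasibility step uses nestedness: for $l>j$ the constraint $y\high{k_j}$ belongs to $Y\high{k_l}$, and since $x\high{k_l}$ is feasible for $\text{SIP}(Y\high{k_l})$ we get $g(x\high{k_l}, y\high{k_j})\geq 0$; letting $l\to\infty$ yields $g(\bar x, y\high{k_j})\geq 0$ for every $j$. On the other hand, because the algorithm did not terminate, $v(x\high{k_j}) = g(x\high{k_j}, y\high{k_j})<0$. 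Using uniform continuity of $g$ on the compact set $\mathcal{F}(Y\high{0})\times Y$ together with $x\high{k_j}\to\bar x$, the gap $g(x\high{k_j}, y\high{k_j}) - g(\bar x, y\high{k_j})$ tends to $0$, so $v(x\high{k_j})\geq -\delta_j$ with $\delta_j\to 0$; combined with $v(x\high{k_j})<0$ this forces $v(x\high{k_j})\to 0$, and continuity of $v$ then gives $v(\bar x)\geq 0$, i.e.\ $\bar x$ is feasible for the full SIP.

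Optimality of $\bar x$ should then follow quickly: each relaxation value is an upper bound, so $f(x\high{k_j})\geq f^*$, whence $f(\bar x) = \lim_j f(x\high{k_j})\geq f^*$ by continuity of $f$, while feasibility of $\bar x$ gives $f(\bar x)\leq f^*$; hence $f(\bar x)=f^*$. The hard part will be the limit feasibility argument: passing all infinitely many constraints to the limit cannot be done pointwise, and the crux is to combine the nestedness-induced inequalities $g(\bar x, y\high{k_j})\geq 0$ with the uniform continuity of $g$ (hence the compactness of $Y$) and the continuity of the value function $v$ in order to rule out that the limit point violates some constraint not yet generated by the algorithm.
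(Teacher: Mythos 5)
Your argument is correct and is essentially the classical proof of this result; note that the paper itself does not prove this theorem but only quotes it from the cited survey of L\'opez and Still, so there is no in-paper proof to compare against. Two small remarks: you are right that the hypotheses as stated (compactness of $\mathcal{F}(Y^{(0)})$ alone) are not enough and that continuity of $f$ and $g$ and compactness of $Y$ must be added as standing assumptions — this is worth stating explicitly rather than as ``the SIP framework''; and the convergent subsequence $y^{(k_j)}\to\bar y$ you extract is never actually used, since your feasibility argument runs entirely through the inequalities $g(\bar x, y^{(k_j)})\ge 0$ obtained from nestedness together with uniform continuity of $g$ and continuity of the value function $v$, which is a perfectly valid (and standard) route.
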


	\end{appendices}

\end{document}